
 \documentclass[11pt]{article}
\usepackage{amsmath}
\usepackage{amssymb} 
\usepackage{latexsym} 
\usepackage{theorem}

\usepackage{txfonts}
\usepackage{euscript}
\usepackage[active]{srcltx}


\evensidemargin .125in
\oddsidemargin  .125in
\topmargin 0in
\textwidth 6.2in
\textheight 8in

 \setcounter{section}{0}
 \setcounter{secnumdepth}{2}


\newcommand{\msf}[1]{\mathsf {#1}}

\newcommand{\mcal}[1]{{\mathcal {#1}}} 




\newtheorem{theorem}{Theorem}  [section]
      
\newtheorem{lemma}[theorem]{Lemma}
\newtheorem{corollary}[theorem]{Corollary}

{\theoremstyle{break}	
{\theorembodyfont{\rmfamily} 
				\newtheorem{example}[theorem]{Example}
}}


\newenvironment{proof}[1]{\smallskip \noindent {\bf #1}}{\qed\smallskip}

\def\qed{\ifhmode\unskip\nobreak\fi\ifmmode\ifinner\else\hskip5pt\fi\fi
 \hfill\hbox{\hskip5pt\vrule width4pt height6pt depth1.5pt\hskip1pt}}
 

\newcommand{\RR}{\ensuremath{{\mathbb R}}}     
\newcommand{\R}[1]{\ensuremath{{\mathbb R}^{#1}}} 

\newcommand{\FF}{\ensuremath{{\mathbb F}}}     
\newcommand{\HH}{\ensuremath{{\mathbb H}}}    

\newcommand{\CC}{\ensuremath{{\mathbb C}}}     

\newcommand{\ZZ}{\ensuremath{{\mathbb Z}}}	   
\newcommand{\NN}{\ensuremath{{\mathbb N}}}	   
\newcommand{\Np}{\ensuremath{{\mathbb N}_{+}}}  
\renewcommand{\S}[1]{\ensuremath{{\bf S}^{#1}}} 

\newcommand{\D}[1]{\ensuremath{{\bf D}^{#1}}} 

\newcommand{\pde}[2]
	{\ensuremath{\frac{\partial #1}{\partial #2}}}	

\def\d#1dt{\frac{d#1}{dt}}    

\newcommand{\lam}{\ensuremath{\lambda}}

\newcommand{\gam}{\ensuremath{\gamma}}
\newcommand{\Fix}[1]{\ensuremath{\operatorname{\mathsf {Fix}}(#1)}}
\newcommand{\co}{\colon\thinspace} 

\renewcommand{\gg}{\ensuremath{\mathfrak g}}

\newcommand{\dd}{\ensuremath{\mathfrak d}} 

\newcommand{\kk}{\ensuremath{\mathfrak k}}

\renewcommand{\ll}{\ensuremath{\mathfrak l}} 

\renewcommand{\ss}{\ensuremath{\mathfrak {s}}}
 
\newcommand{\ttt}{\ensuremath{\mathfrak t}}

\newcommand{\vv}{\ensuremath{\mathfrak v}}

\newcommand {\w}[1] {\ensuremath{\widetilde {#1}}}
\newcommand{\p}{\ensuremath{\partial}}

\renewcommand{\ker}{\operatorname{\msf {ker}}}

\setcounter{secnumdepth}{1}

\begin{document}

 \pagestyle{plain}
\date{~}
 \author{\Large  Morris W. Hirsch\\
 Mathematics Department\\
University of Wisconsin at Madison\\
University of California at Berkeley}
 \title{Smooth actions of Lie groups and Lie algebras on manifolds\\
}
 \maketitle
\begin{flushleft}
\small \sf
Journal of  Fixed Point Theory and Applications\\
Volume 10 (2011), pp. 219--232\\
Published online November 12, 2011
\end{flushleft}

\

\begin{center} {\em Dedicated to Professor Richard Palais with warmest
    regards \\ on the
    occasion of his eightieth birthday.} 
\end{center}

\

\begin{abstract}Necessary or sufficient conditions are presented for
  the existence of various types of actions of Lie groups and Lie
  algebras on manifolds.
  \end{abstract}




\tableofcontents

\section{Introduction} \label{sec:intro}

Lie algebras were introduced by Sophus Lie under the name
``infinitesimal group,'' meaning  a finite dimensional
transitive Lie algebra of analytic vector fields on an open set in $\R
n$.
In his 1880 paper {\em Theorie der Transformationsgruppen}
  \cite{Lie80,  Hermann65} and the later book with F. Engel
  \cite{LieEngel93}, Lie classified infinitesimal groups acting in dimensions
  one and two up to analytic coordinate changes.  This  work
  stimulated much research, but attention soon shifted to the
  structure, classification and representation of abstract Lie algebras
  and Lie groups.   

There are
relatively few papers on 
nonlinear actions by noncompact Lie groups (other than $\RR$ and $\CC$). A
selection is included in the References.

We avoid the important but difficult classification problems, looking
instead for connections between algebraic invariants of Lie algebras,
topological invariants of manifolds, and dynamical properties of
actions.  The motivating questions are whether a given Lie group or Lie
algebra acts effectively on a given manifold, how smooth such actions
can be,  and what can be said about orbits and kernels.

\subsection{Background}
In 1950  Mostow \cite{Mostow50} completed  Lie's
program of classifying effective transitive surface actions.
One of his major results is:
\begin{theorem}	[{\sc Mostow}]  \label{th:mostow}
A surface $M$ without boundary admits a transitive Lie group
 action iff\footnote
{We use the late Professor Halmos'  symbol ``iff''  for
   ``if and only if.''}
$M$ is a plane, sphere, cylinder,  torus,
projective plane, M\"obius strip or 
 Klein bottle.\footnote
 {For each equivalence class of transitive surface actions, Mostow
 gives a representative basis of
 vector fields.  Determining which of these classes contains a
specified Lie algebra can be nontrivial.  Here the
 succinct summary  in M. Belliart
 \cite{Belliart97} is helpful.}
\end{theorem}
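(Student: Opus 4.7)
The proof splits into two directions: constructing a transitive action on each of the seven listed surfaces, and showing no other surface admits one.

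For sufficiency, I would exhibit in each case a transitive action by left translation on a homogeneous space $G/H$. The plane and the torus act on themselves; the sphere and the projective plane are the classical $SO(3)/SO(2)$ and $SO(3)/O(2)$; the cylinder is $(SO(2)\times\RR)/\{e\}$. The Möbius strip and the Klein bottle arise as $G/\Gamma$ where $G$ is the connected 2-dimensional non-abelian Lie group (or an $\RR$-extension of it) containing a cocompact discrete glide-reflection subgroup $\Gamma$; in each case $G$ acts effectively on the quotient by left translation. This part is a finite verification.

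For necessity, suppose $G$ acts transitively on the connected surface $M$. First I would reduce to the case that $G$ is connected and effective: the identity component $G_0$ has open orbits in $M$, so by connectedness $G_0$ also acts transitively; then quotienting by the (normal, closed) kernel of the action gives an effective transitive action, and $M\cong G/H$ for a closed subgroup $H$. Now split on compactness. If $M$ is compact, invoke the theorem of Hopf–Samelson (or for dimension~$2$ a direct argument) that every compact connected homogeneous space satisfies $\chi\geq 0$; this immediately forces $M$ into $\{S^2,\RR P^2,T^2,K\}$.

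If $M$ is non-compact, the argument proceeds by dimension of $G$. Since $\dim M=2$ we have $\dim G=\dim H+2\geq 2$. When $\dim G=2$, $H$ is discrete and $G$ is either $\RR^2$ or the affine group $\mathrm{Aff}^+(\RR)$; enumerating the discrete subgroups and quotients produces exactly $\RR^2$, the cylinder, and the Möbius strip among non-compact surfaces. When $\dim G\geq 3$, one uses the Levi decomposition to reduce: a semisimple factor acting with open orbit on a surface must essentially be $SO(3)$, $SL(2,\RR)$, or a covering, whose homogeneous surfaces are already accounted for, while a solvable acting transitively on a surface factors through a $2$-dimensional quotient, returning to the previous case.

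The main obstacle is the non-compact portion: one cannot simply invoke a topological invariant like $\chi$, so one must carry out the coordinate-by-coordinate classification of $2$-dimensional homogeneous spaces. This is essentially the content of Mostow's 1950 paper and rests on Lie's earlier local classification of transitive infinitesimal groups in dimensions one and two, globalized via a careful analysis of the closed subgroups $H$ and the fundamental group of $G/H$.
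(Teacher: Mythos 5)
The paper does not actually prove this theorem: it is quoted as background and attributed to Mostow \cite{Mostow50}, with the compact half of your necessity argument corresponding to the paper's Theorem \ref{th:vinberg} (the statement $\chi(G/H)\ge 0$). So the only question is whether your outline is sound on its own, and it contains one genuine structural error together with a construction that cannot work as stated.

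The error: a non-orientable surface can never be written as $G/\Gamma$ with $\Gamma$ discrete. A Lie group is parallelizable, hence orientable, and left translation by any element of the connected group $G$ is isotopic to the identity, hence orientation-preserving; therefore $G/\Gamma$ is orientable for every discrete subgroup $\Gamma$. This kills your realization of the M\"obius strip and the Klein bottle as quotients of a $2$-dimensional group by a ``cocompact glide-reflection subgroup'' (note also that the only non-abelian connected $2$-dimensional group is the simply connected group $\mathrm{Aff}^+(\RR)\cong\RR\rtimes\RR$, which is non-unimodular and so admits no cocompact discrete subgroup at all). The same point invalidates the step in your necessity argument where the case $\dim G=2$ is claimed to produce the M\"obius strip: that case yields only the plane, the cylinder and the torus. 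The two non-orientable surfaces force $\dim H\ge 1$ and hence $\dim G\ge 3$; correct realizations are, for instance, $\RR P^2$ as a homogeneous space of $PSL(3,\RR)$ and the open M\"obius band as $\RR P^2$ minus a point, homogeneous under the stabilizer of that point, with the Klein bottle a homogeneous space of a suitable solvable group of dimension at least $3$. Beyond this, the reduction for $\dim G\ge 3$ is only gestured at: the claim that a transitive solvable action on a surface ``factors through a $2$-dimensional quotient'' is false as stated (Lie's list contains transitive solvable algebras on $\RR^2$ of arbitrarily large dimension admitting no such quotient; what is true, and what Mostow actually establishes, is that one can pass to a transitive subgroup of small dimension). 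As written, the non-compact case of your argument is an appeal to Mostow's paper rather than a proof.
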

\noindent By a curious coincidence these are the only surfaces
without boundary admitting effective actions of $SO(2)$, according to
a well known folk
theorem.\footnote
{The key points in the proof are that the action is isometric
for any metric obtained by averaging a Riemannian metric, and the
existence of arcs transverse to nonconstant orbits (Whitney
\cite{Wh33}).}

We mention 
a far-reaching extension of Theorem \ref{th:mostow} that deserves
to be better known:   

\begin{theorem}         \label{th:vinberg}
Let $G$ be a Lie group and $H$ a closed subgroup such that
  $G/H$ is compact.  Then $\chi(G/H)\ge 0$, and if $\chi(G/H) > 0$
  then the fundamental group of $G/H$ is finite.
\end{theorem}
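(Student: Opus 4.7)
The plan is to reduce to the case of a compact Lie group via Montgomery's theorem, then compute the Euler characteristic through a maximal-torus action, and finally analyze the fundamental group via the homotopy sequence of a homogeneous fibration.

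First I would reduce to $G$ compact and connected. Replacing $G$ by its identity component only decomposes $G/H$ into a finite disjoint union of diffeomorphic copies of $G_0/(G_0\cap H)$, so one may assume $G$ is connected. Montgomery's theorem (1950) then asserts that any maximal compact subgroup $K\subset G$ still acts transitively on the compact space $G/H$, yielding a $K$-equivariant diffeomorphism $G/H\cong K/(K\cap H)$. For connected $G$ the subgroup $K$ is itself connected; passing further to the identity component $L_0$ of $L:=K\cap H$ only produces a finite cover $K/L_0\to K/L$. Finite covers multiply Euler characteristics by positive integers and embed fundamental groups with finite index, so it suffices to prove the theorem for the compact connected homogeneous space $K/L_0$.

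Next I would compute $\chi(K/L_0)$ using a maximal torus $T\subset K$ acting by left translation. Since $\chi(X)=\chi(X^T)$ for every smooth action of a torus on a compact manifold, and the fixed set equals $\{gL_0 : g^{-1}Tg\subset L_0\}$, two cases arise. If $\operatorname{rank}L_0<\operatorname{rank}K$, no conjugate of $T$ fits inside $L_0$ and the fixed set is empty, so $\chi(K/L_0)=0$. If the ranks agree, one may take $T\subset L_0$; conjugacy of maximal tori within $L_0$ identifies the fixed set with $N_K(T)/N_{L_0}(T)$, a finite set of cardinality $|W(K)|/|W(L_0)|>0$. In either case $\chi(G/H)\ge 0$, with strict positivity exactly when the ranks coincide (the Hopf--Samelson criterion).

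Finally, assuming $\chi>0$, I would deduce that $\pi_1(K/L_0)$ is finite. Rank equality forces the identity component $Z^0$ of the center of $K$ (which lies in every maximal torus) to be contained in $L_0$. The homotopy exact sequence of the fibration $L_0\hookrightarrow K\to K/L_0$ yields
$$
\pi_1(L_0)\longrightarrow \pi_1(K)\longrightarrow \pi_1(K/L_0)\longrightarrow 0.
$$
For a compact connected Lie group, $\pi_1$ is finitely generated abelian of torsion-free rank $\dim Z^0$; the image of $\pi_1(Z^0)\subset\pi_1(L_0)$ exhausts that free part up to finite index, so the cokernel $\pi_1(K/L_0)$ is torsion and hence finite. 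The main obstacle throughout is the reduction in Step~1: without Montgomery's theorem one is stuck in the noncompact setting, where neither torus localization nor the structure theory of compact Lie groups is directly available; once reduced to the compact case the remaining arguments are classical.
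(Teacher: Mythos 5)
Your Steps 2 and 3 --- the Hopf--Samelson count of torus fixed points on $K/L_0$, and the homotopy exact sequence argument showing that $\pi_1(K/L_0)$ is a torsion quotient of $\pi_1(K)$ once $\operatorname{rank}L_0=\operatorname{rank}K$ --- are correct and standard \emph{for a compact group} $K$. The gap is in Step 1. Montgomery's transitivity theorem does not assert that a maximal compact subgroup of a connected $G$ acts transitively on every compact $G/H$; it carries the hypothesis that $\pi_1(G/H)$ is finite. Without that hypothesis the assertion is false: for $G=\RR$, $H=\ZZ$ the maximal compact subgroup is trivial while $G/H$ is the circle; for $G$ the Heisenberg group and $H$ its integer lattice, $G/H$ is a compact nilmanifold whose fundamental group is nilpotent and not virtually abelian, so it is not a homogeneous space of \emph{any} compact Lie group. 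Worse, invoking Montgomery's theorem to prove the second assertion is circular, since finiteness of $\pi_1(G/H)$ is exactly one of the conclusions to be established.

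Closing the gap requires the structure theory of general compact homogeneous spaces, which is what the source cited in the paper (Gorbatsevich--Onishchik--Vinberg) actually supplies: every compact $G/H$ with $G$ connected admits a natural fibration over a homogeneous space $K/L$ of a compact Lie group whose fiber $F$ is a compact aspherical homogeneous space of a solvable group. Multiplicativity of the Euler characteristic in fibrations of closed manifolds gives $\chi(G/H)=\chi(F)\cdot\chi(K/L)$, and a compact solvmanifold of positive dimension has $\chi(F)=0$ (it is an iterated torus bundle over a torus). Hence either $\chi(G/H)=0$, or $F$ is finite and $G/H$ is, up to the finite coverings you already handle, a homogeneous space of a compact group --- and only at that point do your Steps 2 and 3 apply. (The paper itself offers no proof of this theorem, only references, so there is no internal argument to compare against; the comparison above is with the proof in the cited literature.)
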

This is due to  
 Gorbatsevich  {\em et al.\ }\cite{Vinberg77} 
(Part II,  Chap.{\ }5,  p{.\ }174,
 {Cor.\ }1). 
 See also Hermann \cite{Hermann65}, 
 Felix {\em et al.\ }\cite[Prop. 32.10]{Halperin01},
Halperin \cite{Halperin77},
Mostow \cite{Mostow05}, Samelson \cite{Samelson46}.

\paragraph{Acknowledgments}  Thanks to the following people for 
valuable advice: A. Adem, M. Belliart, M. Brown, J. Hubbard, R. Kirby,
D. B. A. Epstein, M. Belliart, W. Goldman, R. Gompf, J. Harrison,
K. Kimpe, G. Mostow, F. Raymond, J. Robbin, D. Stowe, W. P. Thurston,
F.-J. Turiel, A. Weinstein.

\subsection{Terminology} \label{sec:term}
  The sets of integers,
positive integers and natural numbers are denoted by $\ZZ$,
$\Np=\{1,2,\dots\}$ and $\NN=\Np\cup 0$ respectively.  $i, j, k, l, m,
n,r$ denote natural numbers, assumed positive unless the contrary is
indicated. 
  $\FF$ stands for the real field $\RR$ or the complex field $\CC$.
Vector spaces and Lie algebras are real and
finite dimensional, and manifolds and Lie groups are connected, unless
otherwise noted.    
The kernel of a homomorphism $h$ is denoted by $\ker (h)$.

A topological manifold is a locally Euclidean metric space.  Unless
otherwise noted, manifolds are assumed to be analytic.
$M^n$ denotes a real or complex analytic manifold having dimension $n$
over the ground field $\RR$ or $\CC$.  The boundary of $M$ is $\p M$.
Except as otherwise indicated, manifolds  are connected
and maps between manifolds are $C^\infty$.  The tangent vector space
to $M$ at $p\in M$ is $T_p(M)$.  
A vector field on $M$ is always assumed to be tangent to $\p M$. 

``Group'' and ``algebra'' are shorthand for ``Lie group'' and ``Lie
algebra''.  $G$ denotes a Lie group with Lie algebra $\gg$ and
universal covering group $\w G$.  Groups are assumed connected unless
the contrary is indicated.  The subscript ``$\circ$'' denotes the
identity component.  Lie groups are named by capital Roman letters and
their Lie algebras are named by the corresponding lowercase gothic
letters.

$GL(m, \FF)$ is the group of $m\times m$ invertible matrices over
$\FF$; its Lie algebra is $\gg\ll (m,\FF)$.  The subgroup of
unimodular matrices is $SL (n,\FF)$, and that of the unimodular upper
triangular matrices is $ST (m,\FF)$. The $k$-fold direct product
$G\times\dots\times G$ is denoted by $G^k$.
The universal covering group of $G$ is $\w G$.

The commutator subgroup of $G$ is $G'$.  The upper central series is 
recursively defined by  $G^{(0)}=G$, \ $G^{(j+1)}=(G^{(j)})'$, with  
corresponding Lie algebras $\gg^{(j)}$.  Recall
 that $G$  and  $\gg$ are   {\em solvable}, of {\em derived length} $l=\ell
 (\gg)=\ell (G)$, if $l\in\Np$ is the smallest number satisfying
 $\gg^{(l)}=0$.  For example, $\ell (\ss\ttt (m,\FF)) = m+1$.

$G$ and $\gg$ are   {\em nilpotent} if there exists $k\in\NN$ such that
$\gg_{(k)}=\{0\}$, where $\gg_{(0)}=\gg$ and
$\gg_{(j+1)}=[\gg, \gg_{(j)}]$.   It is  known that  $\gg$ is solvable
if and only $\gg'$ is nilpotent (Jacobson \cite[Corollary
  II.7.2]{Jacobson62}).  

\paragraph{Actions and local actions}
An {\em action} $\alpha$ of $G$ on  $M$, 
denoted by $(G,M,\alpha)$,
is a homomorphism $g\mapsto g^\alpha$ from $G$ to the group of
homeomorphisms of $M$, having  a continuous {\em evaluation map}
\[
  \mathsf{ev}_\alpha \co G\times M\to M, \,(g,x)\mapsto g^\alpha(x). 
\] 
The action is called  $C^s$ when
$\mathsf{ev}_\alpha$ is differentiable of class $C^s$, where $s\in
\NN$, $s=\infty$, or $s= \omega$ (meaning analytic).
``Smooth'' is a synonym for $C^\infty$.  A {\em flow} is an action of
$\RR$. 

If $1\le r\le \omega$, a $C^r$ {\em Lie algebra action} $\beta$ of
$\gg$ on $M$, recorded as $(\gg, M,\beta)$, is a linear map
$X\mapsto X^\beta$ from $\gg$ to $\vv^r (M)$ that
commutes with Lie brackets and whose evaluation map is $C^r$.
Unless otherwise indicated it is tacitly assumed that $r=\infty$ or
$\omega$.  The action is {\em complete} provided each vector field
$X^\beta$ is complete, i.e., all its integral curves extend over
$\RR$. 

An
{\em $n$-action} (of a Lie group or Lie algebra)  is an action on
an $n$-dimensional manifold. 

A $C^s$ {\em local action} $\lam$ of $G$ on $V$, $(0\le s\le \omega)$ is a
homomorphism $g\mapsto g^\lambda$ from $G$ to the groupoid of $C^s$
diffeomorphisms 
between open subsets.
having the following properties:  The evaluation map $(g, x)\mapsto
g^\lambda (x)$ defines a $C^s$ map $\Omega \to V $, where $\Omega$ is
an open neighborhood of $\{e\}\times V$. 
Suppose  $s>1$.  Corresponding to $\lambda$ is a $C^{s-1}$
action of $\gg$ on $V$ denoted by $\msf d\lam$.  
Conversely, every  $C^s$ action
of $\gg$ comes from a 
$C^s$ local actions of $G$.  When 
$G$ is simply connected and the  Lie
algebra action
 $(\gg, M,\beta)$ is a complete, then there exists an action $(G, M, \alpha)$ such that
$\beta=\msf d\alpha$.   For results on the
smoothness of these actions see Hart \cite{Hart81, Hart83}, 
Stowe \cite{Stowe83}.

The {\em orbit} of $p\in M$ under $(G,M,\alpha)$ is $\{g^\alpha (p)\co
g\in G\}$, and the orbit of $p$ under a Lie algebra action
$(\gg,M,\beta)$ is the union over $X\in \gg$ of the integral curves of
$p$ for $X^\beta$.  An action is {\em transitive} if
it has only one orbit.

The  {\em fixed point set} of $(G, M, \alpha)$ is the set 
\[\Fix {\alpha}:=
\{ x\in M\co g^\alpha (x)=x, \ (g\in G)\},
\]
denoted also by $\Fix {G^\alpha}$. 
For Lie algebra actions $( \gg,M,\alpha)$ the fixed point set is 
\[ 
 \Fix{\beta}:= \Fix {\gg^\beta} :=\{p\in M\co X^\beta_p=0, \ (X\in \gg)\}
\]
Thus $p\in\Fix \beta$ iff $p$ is a fixed point for the
local flows on $M$ defined by the vector fields $X^\beta$ for all
$X\in \gg$. 

The {\em support} of an action $\gamma$ on $M$ is the closure of
$M\,\verb=\=\,\Fix \gamma$.   

An action $\alpha$ is {\em effective} if  $\ker (\alpha)$ is
trivial, and {\em nondegenerate} if the fixed point set of every
nontrivial element has empty interior.  A degenerate action $\alpha$
of $\gg$ is trivial if $\alpha$ is analytic or $\gg$ is simple.

A group action is {\em almost effective} if its
kernel is discrete.

\section{Constructions of  actions} 
 \label{sec:construction}

\subsection{Analytic actions of $\R n$}    \label{sec:abelian}
It is true, but not easy to prove, that every real analytic manifold
admits a nontrivial analytic vector field.\footnote{This is false for
complex manifolds, e.g., Riemann surfaces of genus $>1$.} 
In
fact the following holds:
\begin{theorem}         \label{th:Rn}
The vector group $\R n$ has an effective analytic action on
every real analytic manifold $M$ of dimension $\ge 2$.
\end{theorem}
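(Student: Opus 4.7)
The plan is to exhibit the $\R n$-action through $n$ pairwise commuting complete analytic vector fields on $M$, built out of a single complete analytic vector field $X$ and $n$ analytic first integrals $f_1,\dots,f_n$ of $X$ (so $Xf_i=0$) that are linearly independent over $\RR$. Setting $X_i:=f_iX$, the computation $[f_iX,f_jX]=(f_i\,Xf_j-f_j\,Xf_i)X=0$ shows the $X_i$ commute, and since $f_i$ is constant along every $X$-orbit, the integral curve of $f_iX$ through $p$ is the $X$-orbit through $p$ traversed at the constant speed $f_i(p)$, so each $f_iX$ inherits completeness from $X$. The joint flow is then the analytic $\R n$-action
\[
   \alpha(t_1,\dots,t_n)(p) \;=\; \phi_X^{\,t_1 f_1(p)+\cdots+t_n f_n(p)}(p),
\]
where $\phi_X$ denotes the flow of $X$.

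For effectiveness, fix a nonzero $(t_1,\dots,t_n)\in\R n$; the analytic function $g:=\sum_i t_i f_i$ is not identically zero by linear independence, so it is nonvanishing on a dense open subset $U\subset M$. Each $f_i$ is constant on every $X$-orbit, so on a non-periodic orbit $\mathcal O$ the restriction $g|_{\mathcal O}$ is the constant $\sum_i t_i f_i(\mathcal O)$; arranging that the vectors $(f_1(\mathcal O),\dots,f_n(\mathcal O))$ as $\mathcal O$ ranges over non-periodic orbits span $\R n$ (a Vandermonde computation below) forces some such $\mathcal O$ with $g|_{\mathcal O}\ne0$. Picking $p\in\mathcal O$ then gives $\alpha(t_1,\dots,t_n)(p)=\phi_X^{g(p)}(p)\ne p$, so the kernel of $\alpha$ is trivial.

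The main obstacle is the joint construction of $X$ and the $f_i$. Existence of a nontrivial analytic vector field on any real analytic $M$ is the nontrivial fact recalled just before the theorem; rescaling by a suitable positive analytic function secures completeness. The delicate step is producing a nonconstant analytic first integral $h\co M\to\RR$: a flow with dense orbits admits no nonconstant continuous invariant, so $X$ and $h$ must be built simultaneously. The natural strategy, exploiting $\dim M\ge 2$ together with the rich ring of global analytic functions on $M$ furnished by the Grauert--Morrey analytic embedding $M\hookrightarrow\R N$, is to first select a nonconstant analytic $h\co M\to\RR$ (for example an embedding coordinate) and then to take $X$ to be an analytic vector field tangent to the level sets of $h$, which are analytic hypersurfaces of positive dimension precisely because $\dim M\ge 2$; such an $X$ exists by a local flow-box construction together with a globalization, and $h$ is an $X$-invariant by construction. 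Setting $f_i:=h^{i-1}$ yields $n$ linearly independent analytic $X$-invariants; since $h$ is nonconstant and takes infinitely many values, the Vandermonde matrix built from $n$ distinct values of $h$ on non-periodic orbits of $X$ is invertible, which supplies the spanning condition needed for effectiveness above and completes the construction.
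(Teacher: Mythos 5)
Your overall architecture is exactly the paper's: a single complete analytic vector field $X$ with a nonconstant analytic first integral $h$, the commuting family $u_i(h)\,X$ with $u_1,\dots,u_n$ linearly independent over $\RR$ (the paper takes general bounded $u_i$, you take the powers $h^{i-1}$), completeness and commutativity by the same computations, and effectiveness from linear independence on the dense open set where $X\neq 0$. The step you do not actually prove, however, is the one carrying essentially all of the difficulty: the existence of a nontrivial \emph{analytic} vector field $X$ on an arbitrary real analytic $M$ with $dh(X)=0$. You assert this ``by a local flow-box construction together with a globalization,'' but in the analytic category there are no partitions of unity, so local constructions cannot be glued; this is exactly why the remark preceding the theorem flags even the existence of one nontrivial analytic vector field as ``true, but not easy to prove.'' The paper's Lemma fills this hole by embedding $M$ analytically in a half-space $H^d$ (Grauert/Whitney), taking a neighborhood $W$ with an analytic retraction $\pi\co W\to M$, writing down the explicit global field $Y=\pde{(h\circ\pi)}{x_2}\pd{x_1}-\pde{(h\circ\pi)}{x_1}\pd{x_2}$ on $W$, rescaling by a global analytic function to force completeness, and then taking the tangential component of $Y$ along $M$; that component is analytic, annihilates $dh$, and the hypothesis $\dim M\ge 2$ is what makes it possible to arrange nontriviality (on a $1$-manifold a nontrivial field admits no nonconstant first integral, so the theorem's dimension restriction is essential). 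Without some such global ambient formula, your construction of $X$ is a gap rather than a proof; the rest of your argument then goes through as in the paper.

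A secondary remark: your effectiveness argument, like the paper's, really establishes that the map $(t_1,\dots,t_n)\mapsto\sum_i t_i f_iX$ is injective, i.e.\ that the Lie algebra action is effective and the kernel of the group action is discrete. Since $\alpha(t)(p)=\phi_X^{g(p)}(p)$ with $g=\sum t_if_i$, a point on a periodic orbit can be fixed even when $g(p)\neq 0$, and your appeal to non-periodic orbits quietly presupposes that enough of them exist. You are in fact slightly more careful here than the published proof, which ignores the issue entirely, so I would not count this against you, but be aware that it is not fully closed in either version.
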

The proof relies on the theory of approximation of
smooth functions by analytic functions (Grauert \cite{Grau58}).
\begin{lemma}           \label{th:rnLem}
Let $f\co M\to \RR$ be a nonconstant analytic function that is
constant on each boundary component.  Then there exists a nontrivial
analytic vector field $X$ on $M$ such that $df( X_p) =0$ for all $p\in
M$, and $X$ generates an analytic flow that preserves each level set
of $f$.
\end{lemma}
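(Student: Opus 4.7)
The plan is to construct $X$ as a ``symplectic-gradient-type'' combination of two analytic vector fields. For any analytic vector fields $Y, Z$ on $M$, the field $X_{0} := (Yf)\,Z - (Zf)\,Y$ automatically satisfies $X_{0} f = (Yf)(Zf) - (Zf)(Yf) = 0$, so $df(X_{0}) \equiv 0$ and the integral curves of $X_{0}$ remain in level sets of $f$. The task then splits into two parts: choose $Y, Z$ so that $X_{0}\not\equiv 0$, and rescale by a positive analytic function so that the resulting field is complete.

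To secure an adequate supply of analytic vector fields, I would appeal to Grauert's theorem, which realizes $M$ as a closed real analytic submanifold of some $\R{N}$ possessing an analytic tubular neighborhood. The orthogonal projections of the coordinate fields $\p/\p x_{i}$ onto $TM$ are analytic vector fields $Y_{1},\dots,Y_{N}$ spanning $T_{p}M$ at every $p\in M$; an analogous construction along $\p M$ makes each $Y_{i}$ tangent to $\p M$. Set $X_{ij} := (Y_{i}f)Y_{j} - (Y_{j}f)Y_{i}$, which is analytic, tangent to $\p M$, and annihilates $df$. For nontriviality, nonconstancy of $f$ supplies a point $p$ with $df_{p}\neq 0$; since the $Y_{k}(p)$ span $T_{p}M$, some $(Y_{i}f)(p) = df_{p}(Y_{i}(p))$ is nonzero, and because the lemma is nontrivial only when $\dim M\ge 2$ we may choose $j$ so that $Y_{i}(p), Y_{j}(p)$ are linearly independent. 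Then $X_{ij}(p)\neq 0$, so by analyticity $X_{0}:=X_{ij}$ is nontrivial on all of $M$.

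It remains to upgrade $X_{0}$ to a complete vector field without disturbing the other properties. A standard exhaustion argument produces a positive smooth function $h_{0}$ on $M$ for which $h_{0}X_{0}$ is complete (bound the ambient speed using a proper exhaustion of $M$ and let $h_{0}$ decay quickly enough at infinity). Grauert's approximation theorem, cited in the remark preceding the lemma, then permits replacing $h_{0}$ by a positive analytic $h$ close enough to $h_{0}$, uniformly on compact sets, that $hX_{0}$ is still complete. Setting $X:=hX_{0}$ gives a nontrivial analytic vector field, tangent to $\p M$, satisfying $df(X)=0$ and generating a complete flow; since $X$ is a positive reparametrization of $X_{0}$, each integral curve of $X$ still lies in a single level set of $f$, so the resulting flow preserves each level set. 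The combinatorial step yielding $X_{ij}$ is routine once the spanning vector fields $Y_{i}$ are in hand; the technical obstacle I expect is the analytic rescaling, where Grauert approximation must be carried out carefully enough that the global completeness property, depending on behavior at infinity, survives the transition from the smooth to the analytic category.
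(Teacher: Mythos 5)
Your argument is correct and rests on the same key idea as the paper's proof: an antisymmetric combination of vector fields that annihilates $df$ (the paper's $\pde{f}{x_2}\pd{x_1}-\pde{f}{x_1}\pd{x_2}$ is exactly your $X_{ij}$ for ambient coordinate fields), followed by an analytic rescaling making the field bounded in a complete Riemannian metric, hence complete. Where you genuinely differ is in the globalization. The paper builds the field on an open set $W$ of a half-space (a tubular neighborhood of the embedded $M$) applied to $f\circ\pi$, where $\pi\co W\to M$ is the analytic retraction, and then pushes it down to $M$ by splitting $U_p=X_p+Z_p$ with $X_p\in T_pM$ and $\msf d\pi_p Z_p=0$; you instead project the ambient coordinate frame orthogonally onto $TM$ and form the combination intrinsically. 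Your route is arguably cleaner: it verifies nontriviality of $X_{ij}$ pointwise, whereas the paper leaves unaddressed why the tangential part of $U$ is not identically zero. Two spots you wave at can in fact be nailed down. First, you need not modify the $Y_i$ to be tangent to $\p M$: since $f$ is constant on boundary components, $df_p$ annihilates $T_p(\p M)$, and writing $Y_k=Y_k^{T}+c_k\nu$ with $\nu$ normal shows the normal component of $X_{ij}$ at $p\in\p M$ equals $df_p(\nu)\,(c_ic_j-c_jc_i)=0$, so the antisymmetric combination is automatically tangent to the boundary. Second, the completeness does survive the analytic approximation provided you use Grauert approximation in the fine (Whitney) topology: approximate $\log h_0$ by an analytic $v$ with $|v-\log h_0|<1$ everywhere and set $h=e^{v}$, so $0<h<e\,h_0$ and $|hX_0|$ stays bounded in the complete metric --- precisely the completeness criterion the paper itself invokes. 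Finally, as you observe, both proofs (and the lemma itself) tacitly require $\dim M\ge 2$, the hypothesis appearing in Theorem \ref{th:Rn}.
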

\noindent{\em Proof.}\footnote{Joint work with Professor Joel Robbin.}
First consider the case
that $M$ is an open set $W$ in a half-space $
H^d=[0,\infty)\times \R {d-1} \subset \R d$, so that $\partial W=
W\cap (\{0\}\times \R {d-1})$.
Fix a nonconstant analytic function
$f\co W\to \RR$ that is constant on  each boundary component.
The analytic vector field $Y=\pde f {x_2} \frac{\partial}{\partial
x_1} - \pde f {x_1} \frac{\partial}{\partial x_2}$ is nontrivial,
annihilates $df$, and is tangent to $\p W$.
Endow $W$ with  a complete Riemannian metric and  denote the norm 
of  $\xi\in TW$ by $\vert \xi\vert$. 
There is a nonconstant analytic function
$u\co W\to \RR$ such that 
$\sup_{q\in M}|u(q)Y_q| <\infty$.  The vector field $X=uY$ is
complete, and satisfies the Lemma. 

Now let $M$ be arbitrary.  By Whitney's embedding
theorem and the tubular neighborhood theorem (see Hirsch
\cite{Hirsch76}) we take $M$ to be an analytic submanifold of some
halfspace ${H}^d\subset \R d$ such that $ M\cap\partial {H}^d=\partial M$,
with an open neighborhood $W\subset {H}^d$ of 
$M$ having an analytic retraction $\pi\co W\to M$ taking $\partial M$
into $\partial {H}^d$.
By the first part of the proof, there is a complete, nontrivial
analytic vector field $U$ on $W$ that annihilates the function
$f\circ\pi \co W\to\RR$.  For $p\in M$ set $U_p=X_p + Z_p$ with $X_p,
Z_p\in T_pM$ and $\msf d\pi_p Z_p=0$.
The maps $p\mapsto U_p$ are analytic vector fields on $M$.  We
have
\[\begin{split}
0 &=d(f\circ\pi)_p U_p=df_p\circ \msf d\pi_p X_p=
df_p \circ d(\pi|M)_p X_p,\\
  &=df_p X_p
\end{split}
\]
because $\pi|M$ is the identity map. 
\qed

\medskip\noindent {\em Proof of Theorem \ref{th:Rn}. }  Choose an
analytic vector field $X$ on $M$ as in the Lemma.  Fix analytic
functions $u_j\co M\to\RR,\;\:j=1,\dots,n$ that are linearly
independent over $\RR$ such that $|u_j X|$ is bounded. The vector
fields $L_j$ on $M$ defined by
  $L_j(p)= u_j(f (p)) X_p$  
are complete and therefore generate flows $\phi_j$.  In each level set
$V$ of $f$, $L_j|V$ is a constant scalar multiple of $X|V$.  Therefore
$\phi_j$ preserves $V$, and $[L_j, L_k]=0$.  This shows that the
$\phi_j$ generate an analytic action $\Phi$ of the group $\R n$.

To show that $\Phi$ is effective, assume $a_j\in \RR$ are such that
$\sum_j a_j L_j$ vanishes identically, which means 
$(\sum_j a_j u_j(f (p)))
X_p$ vanishes identically.  So therefore does $\sum_j a_j u_j(f (p))$,
by analyticity, because $X_p\ne 0$ in a dense open set.  It follows that
the $a_j$ are  zero because the  $u_j$ are linearly
independent.  \qed

\subsection{Lie algebra actions on noncompact manifolds } 
  \label{sec:open}
A manifold is {\em open} if it is connected, noncompact and without boundary.
On many  open manifolds  it is
comparatively easy to produce  Lie algebra actions that are effective
and analytic:
\begin{theorem}		\label{th:noncompact}
  An open manifold $M^n$ admits an effective Lie algebra action $( \gg,
  M^n,\beta)$ if there is an effective action $(\gg, W^n,\alpha)$ such
  that one of following conditions is satisfied:
\begin{description}

\item[(a)] $M^n$ is parallelizable (which holds if $n=2$ and $M^n$ is
  orientable)

\item[(b)] $n=2$  and $W^2$ is nonorientable. 

\end{description}
In each case $\beta$ can be chosen to be nondegenerate, analytic,
transitive or fixed-point free provided $\alpha$ has the same
property.

\end{theorem}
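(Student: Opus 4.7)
My plan is to construct a smooth immersion $j \co M^n \to W^n$ of the same dimension and pull back the action along $j$. Because such a $j$ is a local diffeomorphism, the formula $X^\beta_p := (dj_p)^{-1}\bigl(X^\alpha_{j(p)}\bigr)$ defines a smooth vector field on $M$, and the linear map $X \mapsto X^\beta$ preserves the Lie bracket (pullback along a local diffeomorphism commutes with the bracket). This mechanism automatically transports nondegeneracy, transitivity and fixed-point freeness---each a pointwise condition preserved by the linear isomorphism $(dj_p)^{-1}$---from $\alpha$ to $\beta$, and choosing $j$ analytic transports analyticity as well. Effectiveness of $\beta$ follows from effectiveness of $\alpha$ as soon as $j(M)$ is large enough that $X^\alpha|_{j(M)}\equiv 0$ forces $X=0$ in $\gg$: this is automatic when $\alpha$ is analytic, nondegenerate or transitive, and in the general smooth case can be arranged by spreading $j(M)$ out to intersect the support of every nontrivial $X^\alpha$.

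Everything therefore reduces to constructing the immersion $j$. In case (a) I would invoke the Hirsch--Phillips immersion theorem: an open parallelizable $n$-manifold admits a smooth (and, via a Grauert-type approximation as in the proof of Lemma \ref{th:rnLem}, analytic) immersion into $\RR^n$; composing with any chart embedding of $\RR^n$ as an open subset of $W^n$ gives the required $j \co M^n \to W^n$. In case (b), if $M^2$ is orientable then $M^2$ is parallelizable and case (a) applies. The remaining situation is $M^2$ nonorientable, and this is where the nonorientability of $W^2$ is essential: the two-dimensional Phillips theorem says $M^2$ immerses in $W^2$ iff a bundle monomorphism $TM \to TW$ exists, equivalently a continuous map $f \co M \to W$ with $f^* w_1(TW) = w_1(TM)$. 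Nonorientability of $W^2$ makes the orientation character $\pi_1(W^2) \to \ZZ/2$ surjective; combined with the facts that open surfaces have free fundamental group and that $W^2$ contains an embedded M\"obius band, this allows one to realize $w_1(TM)$ as the pullback $f^* w_1(TW)$ for a suitable $f$, yielding the desired immersion.

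The main obstacle I anticipate is the immersion step of case (b) when $M^2$ is nonorientable: one has to realize the bundle monomorphism $TM \to TW$ compatibly with the first Stiefel--Whitney classes, which calls for a genuinely two-dimensional geometric construction rather than a single appeal to a general theorem. Once $j$ has been produced, the rest of the argument---the pullback, the bracket compatibility, the effectiveness check, and the transfer of the additional listed properties---is a direct and essentially formal calculation.
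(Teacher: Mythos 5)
Your proposal is correct and follows essentially the same route as the paper: pull the action back along an equidimensional immersion $j\co M^n\to W^n$ supplied by the Hirsch--Poenaru immersion theorem for open manifolds, with the tangent-bundle hypothesis checked via $w_1$ over the $1$-complex homotopy type of an open surface (the paper factors case (b) through the projective plane and a M\"obius band, whereas you map to $W^2$ directly using the surjective orientation character and free $\pi_1$, but the two arguments are interchangeable). Your observation that effectiveness of the pullback requires $j(M)$ to meet the support of every nontrivial $X^\alpha$ --- automatic in the analytic, nondegenerate or transitive cases, but needing an explicit arrangement in the merely smooth degenerate case, e.g.\ forcing $j(M)$ to contain finitely many points $p_1,\dots,p_k$ with $\bigcap_i\{X\co X^\alpha_{p_i}=0\}=\{0\}$ --- addresses a genuine subtlety that the paper's proof passes over in silence.
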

\begin{proof} We will define $\beta$ as the pullback of $\alpha$ by an
analytic immersion $M^n\to W^n$.  The fundamental theorem of immersion
theory (Hirsch \cite{Hirsch59, Hirsch61}, Poenaru \cite{Poenaru62},
Adachi \cite{Adachi84})) says that such an immersion exists provided
$M^n$ is an open manifold, and the tangent bundle $TM^n$ is isomorphic
to the pullback of $TW^n$ by map $f\co M^n \to W^n$.  In case (a) take
$f$ to be any constant map.  For case (b) we first show that $M^2$
immerses in the M\"obius band $B^2$.  To see this, note that every
open surface has the homotopy type of a $1$-dimensional simplicial
complex, whence the classification of vector bundles implies $TM^2$ is
the pullback of $TP^2$ by a map $f$ from $M^2$ to the projective plane
$P^2$.  As $M^2$ is an open surface, it can be deformed into an
arbitrary neighborhood of its $1$-skeleton, hence $f$ can be chosen to
miss a point of $P^2$ and thus have its image in a M\"obius band. This
shows that $M^2$ immerses in $B^2$, and the conclusion follows because
$B^2$ immerses in every nonorientable surface.
\end{proof}

 The real form of a
Lie algebra $\gg$ of matrices over $\CC$ or the quaternions $\HH$ is denoted by
$\gg_\RR$.  From  the natural projective actions of matrix groups we obtain:
\begin{corollary}		\label{th:opencor}
The following effective kinds of analytic actions exist:
\begin{description}
  
\item[(a)]  $\ss\ll (3,\RR)$ and $\ss\ll (2,\CC)_\RR$ 
  on all open surfaces, and on all open 
  parallelizable $k$-manifolds for $k\ge 3$,

\item[(b)]$\ss\ll (n, \RR)$  on all
open  parallelizable $k$-manifolds,  $k\ge n-1$,

\item[(c)] $\ss\ll (n, \CC)_\RR$  on
  all open parallelizable $k$-manifolds, $k\ge 2n-2$,

\item[(d)] $\ss\ll (n, \mathbb H )_\RR$  on
  all open parallelizable $k$-manifolds, $k\ge 4n-4$.

\end{description}
\end{corollary}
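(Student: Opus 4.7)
The plan is to produce, from the natural projective action of $SL(n,\FF)$ on $\FF P^{n-1}$ for $\FF\in\{\RR,\CC,\HH\}$, effective analytic Lie algebra actions on low-dimensional open parallelizable ``model'' manifolds, and then propagate these to arbitrary targets using Theorem~\ref{th:noncompact}.

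First, differentiating the transitive smooth action of $SL(n,\FF)$ on $\FF P^{n-1}$ gives an analytic Lie algebra action of $\gg=\ss\ll(n,\FF)_\RR$ on $\FF P^{n-1}$. Its kernel is an ideal of the simple real algebra $\gg$; since a traceless diagonal matrix such as $\mathrm{diag}(1,-1,0,\dots,0)$ induces a nontrivial projective flow, this ideal is zero, so the Lie algebra action is effective. Restricting to an affine chart $U\cong\FF^{n-1}\cong\RR^{d(n-1)}$, where $d=\dim_\RR\FF\in\{1,2,4\}$, preserves effectiveness by analytic continuation, and yields an effective analytic $\gg$-action on $\RR^{d(n-1)}$.

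For any $k\ge d(n-1)$, I let $\gg$ act on $\RR^k=\RR^{d(n-1)}\times\RR^{k-d(n-1)}$ through the first factor; this is an effective analytic action on the open parallelizable manifold $\RR^k$, and Theorem~\ref{th:noncompact}(a) transports it to every open parallelizable $k$-manifold. Specializing to $\FF=\RR,\CC,\HH$ with $d=1,2,4$ establishes parts (b), (c), (d). Part (a) corresponds to $(\FF,n)\in\{(\RR,3),(\CC,2)\}$, where $d(n-1)=2$; the same argument then gives effective analytic $\gg$-actions on every open parallelizable $k$-manifold with $k\ge 2$, which for $k\ge 3$ is the asserted $k$-manifold statement and for $k=2$ covers the open orientable surfaces (an open $2$-manifold is parallelizable iff it is orientable).

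To cover the open nonorientable surfaces I invoke Theorem~\ref{th:noncompact}(b), which requires an effective $\gg$-action on some nonorientable $W^2$. For $\gg=\ss\ll(3,\RR)$, take $W^2$ to be an open M\"obius band, realized as $\RR P^2$ minus a small closed analytic disk, with $\ss\ll(3,\RR)$ acting by restriction of its projective action; effectiveness again persists by analytic continuation. For $\gg=\ss\ll(2,\CC)_\RR$ one must produce an analogous nonorientable model $W^2$, and this is the main obstacle: the natural projective model $\CC P^1$ is orientable, and $SL(2,\CC)$ does not commute with the antipodal anti-involution on $S^2$, so the action does not descend to $\RR P^2$. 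All the remaining cases reduce cleanly to restricting a projective action to an affine chart, padding by trivial $\RR$-factors, and applying Theorem~\ref{th:noncompact}.
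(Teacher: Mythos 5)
Your construction is exactly the route the paper intends: its entire ``proof'' of the corollary is the single clause ``From the natural projective actions of matrix groups we obtain,'' so differentiating the $SL(n,\FF)$-action on $\FF P^{n-1}$, restricting to an affine chart, padding with trivial $\RR$-factors, and feeding the result into Theorem~\ref{th:noncompact} is the right reading, and your dimension counts and effectiveness argument (simplicity of $\ss\ll(n,\FF)_\RR$ plus analytic continuation to open subsets) are correct. Parts (b), (c), (d), the $k\ge 3$ half of (a), the orientable open surfaces, and the $\ss\ll(3,\RR)$ case of nonorientable open surfaces are all complete.

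The one case you leave open --- $\ss\ll(2,\CC)_\RR$ on nonorientable open surfaces --- is a genuine gap, and it cannot be closed by a cleverer choice of $W^2$. Theorem~\ref{th:noncompact}(b) really does require a nonorientable source, since $w_1$ pulls back under an immersion, so a nonorientable surface never immerses in an orientable one. But no nonorientable $W^2$ carrying a nontrivial analytic $\ss\ll(2,\CC)_\RR$-action is available: because the algebra is simple of dimension $6$, Lie's bound on transitive algebras of vector fields on $1$-manifolds rules out $1$-dimensional orbits, and the absence of nontrivial $2$-dimensional real representations together with the Guillemin--Sternberg/Ku\v{s}nirenko linearization rules out fixed points of a nontrivial analytic action; hence any such action on a connected surface is transitive with isotropy a Borel subalgebra, which endows the surface with an invariant complex structure and in particular an orientation. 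So the obstacle you ran into is not a failure of technique: the nonorientable sub-case of (a) for $\ss\ll(2,\CC)_\RR$ appears to be unprovable by this method (and indeed false as stated), and the paper supplies no argument for it. Your writeup is honest about exactly this point; everything else in your proposal coincides with the paper's intended argument.
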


\section{Lie contractible groups}  \label{sec:AC}
Let $\mcal G$ denote either a Lie group $G$ or its Lie
algebra $\gg$.  A {\em deformation} of $\mcal G$ is a $1$-parameter
family $\theta=\{\theta_t\}_{t\in\RR}$ of endomorphisms $\theta_t\co
\mcal G\to \mcal G$ having the following properties:

\begin{description}
\item[(D1)]$\theta_t$ is the identity automorphism if $t\le 0$,

\item[(D2)] $\theta_t = \theta_1$ if $t\ge 1$. 

\item[(D3)] the map $\RR\times \mcal G \to \mcal G$, \, $(t,g)\mapsto
  \theta_t (g)$ is $C^\infty$.
\end{description}

If  $\theta$ is a deformation of $G$, the family 
 $\theta'_t (e)\}_{t\in\RR}$ of 
derivatives at
the unit element $e\in G$  constitute a  deformation $\theta'$
 of $\gg$.  When $G$ is simply connected every
deformation of $\gg$ comes in this way from a unique deformation of
$G$.

An {\em Lie contraction} of $\mcal G$ is a deformation $\theta$ such
that $\theta_1$ is the trivial endomorphism, in which case $\mcal G$
is called {\em Lie contractible}.  It can be shown that this implies
$\mcal G$ is solvable, and $G$ is contractible as a topological space.
It is easy to see that the direct product of finitely many Lie
contractible groups is Lie contractible.
 
We prove below that $\ss\ttt (n,\FF)$ and its commutator ideal, which
is nilpotent, are Lie contractible.  But K. DeKimpe \cite{DeKimpe06}
pointed out that some nilpotent Lie algebras have unipotent derivation
algebras,  ruling out Lie
contractibility.  Goodman \cite{Goodman76} cites an example due to
M\"{u}ller-R\"{o}mer \cite{MullerRomer76} of such an algebra, namely,
the $7$-dimensional Lie algebra with a basis such that
\[\begin{split}
[X_1, X_k] &= \, X_{k+1}, \quad (k=2,\dots,6), \\
[X_2,X_3]  &= \,X_6,\quad [X_2,X_4]  = \,X_7,\\
[X_3, X_4] &= \, X_7,\quad [X_2, X_5] = -X_7. 
\end{split}
\]
See also Ancochea \& Campoamor 
\cite{AncocheaCampoamor01}, Dixmier \& Lister
\cite{DixmierLister57}, 
Dyer \cite{Dyer70}.

Let $\mcal H \subset \mcal G$ be a subalgebra or subgroup.
A deformation $\theta$ of $\mcal G$ is a  {\em retraction} of  $\mcal
G$ into $\mcal H$ provided
\[ 
\theta_1 (\mcal G)\subset \mcal H,  \quad 
 \theta_t(\mcal H)\subset \mcal H \ \mbox{for all $t\in\RR$}.
\]
When such a $\theta$ is given and $\psi$ is a deformation of $\mcal
H$, there is deformation $\psi\#\theta$ of
$\mcal G$, and the  {\em concatenation} of $\psi$ and $\theta$, defined by
\[
  (\psi\#\theta)_t = \begin{cases}
    \theta_{2t}              & \mbox{if $-\infty <t\le 1/2$},\\
    \psi_{2t-1}\circ \theta_1 & \mbox{if $ 1/2 \le t <\infty$}
\end{cases}
\]
is also another retraction of $\mcal G$ int $\mcal H$.
Note that $(\psi\#\theta)_1= \psi_1\circ \theta_1$.  If $\theta$ is a
Lie contraction of $\mcal H$ then $\psi\#\theta$ is a Lie
contraction of $\mcal G$.

\begin{theorem}         \label{th:liecont}
The groups $\w{ST}_\circ (N,\FF)$,   $ST' (n,\FF)$ and their Lie algebras are
 Lie contractible. 
\end{theorem}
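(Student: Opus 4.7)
The plan is to produce explicit Lie contractions of the Lie algebras $\ss\ttt(n,\FF)$ and $\ss\ttt'(n,\FF)$ and then lift them to $\w{ST}_\circ(n,\FF)$ and $ST'(n,\FF)$ via the correspondence recalled just above the theorem statement. Both groups are simply connected: $\w{ST}_\circ(n,\FF)$ by construction, and $ST'(n,\FF)$ because its exponential map is a polynomial diffeomorphism from the nilpotent algebra $\ss\ttt'(n,\FF)$ onto it. The zero Lie-algebra endomorphism integrates to the constant group homomorphism at the identity, so for these simply connected groups the two notions of Lie contraction coincide.

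The key structural input is the grading
\[
\ss\ttt(n,\FF)=V_0\oplus V_1\oplus\cdots\oplus V_{n-1},\qquad [V_i,V_j]\subseteq V_{i+j},
\]
where $V_0$ is the subalgebra of trace-zero diagonal matrices and, for $k\ge 1$, $V_k$ is spanned by the matrix units $E_{ij}$ with $j-i=k$; compatibility with the bracket is immediate from $E_{ij}E_{kl}=\delta_{jk}E_{il}$. For each $s\in\RR$ the weight rescaling $\mu^s\co\sum_k X_k\mapsto\sum_k s^kX_k$ is a Lie-algebra endomorphism of $\ss\ttt(n,\FF)$ preserving $\ss\ttt'(n,\FF)=\bigoplus_{k\ge 1}V_k$. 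Fix a smooth ramp $\rho\co\RR\to[0,1]$ vanishing on $(-\infty,0]$ and equal to $1$ on $[1,\infty)$, and set $\theta_t:=\mu^{1-\rho(t)}$.

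Restricted to $\ss\ttt'(n,\FF)$ the family $\theta$ is already a Lie contraction, because every summand $V_k$ that appears is multiplied by $(1-\rho(t))^k$ and so vanishes at $t=1$. On the full algebra $\ss\ttt(n,\FF)$ the piece $V_0$ is fixed pointwise at every time, so $\theta$ is only a retraction of $\ss\ttt(n,\FF)$ onto $V_0$ in the sense defined just before the theorem. Since $V_0$ is abelian, the scalar family $\psi_t(D):=(1-\rho(t))D$ is trivially a Lie contraction of $V_0$, and the concatenation $\psi\#\theta$ from earlier in this section is then a Lie contraction of the full $\ss\ttt(n,\FF)$. Lifting the two Lie-algebra contractions via the Lie-group/Lie-algebra correspondence completes the argument.

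The only subtle point is the choice of target for the retraction. The grading naturally pushes weight rescaling toward the fixed subalgebra $V_0$, not toward the nilpotent derived ideal $\ss\ttt'(n,\FF)$ that is scaled away, so the retraction lands on the Cartan part; the concatenation step exists precisely to finish off the surviving abelian piece, which is automatic because abelian Lie algebras impose no bracket constraints on their endomorphisms.
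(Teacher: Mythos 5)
Your proof is correct and takes essentially the same route as the paper: the paper's coefficient functions $c_{ij}(t)$, chosen precisely so that $c_{ij}(t)\,c_{jk}(t)=c_{ik}(t)$, are exactly your weight rescaling $\mu^{s(t)}$ applied to the grading of $\ss\ttt (n,\FF)$ by superdiagonals, and both arguments then retract onto the abelian diagonal subalgebra and finish by the same concatenation trick. Your explicit check that both groups are simply connected (so the Lie-algebra contractions lift to group deformations) makes precise a step the paper leaves implicit.
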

\begin{proof}
It suffices to prove the Lie algebras are Lie contractible. 
Give $\ttt (n,\FF)$ the basis $\{T^{(ij)}\}_{1\le i\le j\le m}$ where
the unique nonzero entry of the $n \times n$ matrix $T^{(ij)}$ is $1$
in row $i$, column $j$.  The matrices $T^{(ij)}$ with $i <j$ form a
basis for the commutator ideal $\ss\ttt (n,\FF)'$, and
$\{T^{(11)},\dots,T^{(n-1,n-1)}\}$ is a basis for the subalgebra $\dd
(n,\RR)\subset\ss\tt (n,\RR)$ of diagonal matrices.

For $1\le i < j \le n$ and $t\in\RR$, the functions 

\begin{equation}                \label{eq:cij}
c_{ij}\co\RR\to[0,1], \qquad
c_{ij}(t):=
\begin{cases} 
        
               1      &\text{if}  \ t \le 0,\\
\exp\frac{(j-i)t}{1-t}  &\text{if} \ 0<t<1,\\
                0     &\text{if}  \ t \ge 1
\end{cases}
\end{equation}
are $C^\infty$,  analytic in the open interval $]0,1[$, and flat at
  all $t\notin ]0,1[$. They satisfy

\begin{equation}                \label{eq:cijk}
 c_{ij}(t)\cdot c_{jk}(t)=c_{ik}(t) \qquad (i\le j \le k).
\end{equation}

Consider the $1$-parameter family of  linear maps
\[ \theta_t\co\ss\ttt (n,\FF)\to \ss\ttt (n,\FF), \qquad (t\in\RR),\]
defined on the basis elements by
\begin{equation}                \label{eq:phitT}
\theta_t (T^{(ij)})=
\begin{cases} T^{(ij)}          & \mbox{if $t\le 0$ or $i=j$,}\\
              c^{(ij)}(t)\cdot T^{(ij)}& \mbox{if $0< t <1$ and $i <j$,}\\
              0        & \mbox{if $t\ge 1$ and $i< j$.}
\end{cases} 
\end{equation}
The first equation in (\ref{eq:phitT}) implies $\theta$ maps $\ss\ttt
(n,\FF)'$ into itself and reduces to the identity deformation of $ \dd
(n,\RR)$.  

Equation (\ref{eq:phitT}) defines a  retraction $\theta$ of $\ss\ttt
(n,\FF)$ into $\dd (n,\FF)$, thanks to (\ref{eq:cijk}), and $\theta$
restricts to a Lie contraction $\theta^1$ of $\ss\ttt (n,\FF)'$.  To
obtain a Lie contraction of $\ss\ttt (n,\FF)$ it suffices to form a
concatenation $\psi\#\theta$ where $\psi$ is an algebraic
contraction of $\dd (n,\FF)$.  For example set
\[
   \psi_t(T^{(ii)})=c(t)\cdot T^{(ii)},\qquad (i=1,\dots,n) 
\]
where  $c(t):=c_{21}(t)$  from Equation (\ref{eq:cij})). 
\end{proof}

\subsection{Deformations of actions}   \label{sec:defactions}
Let $\alpha_0, \alpha_1$ be actions of $G$ on $M$. A {\em
  deformation} of $\alpha_0$ to $\alpha_1$ is a $1$-parameter family
of actions $\beta=\{(G, M,\beta_t)\}_{t\in\RR}$ such that
\begin{itemize}
\item $\beta_t = \alpha_0$, \ $(t\le 0)$

\item $\beta_t=\alpha_1$, \ $(t\ge 1)$

\item the map
$\RR\times G\times  M \to M, \, (t,g,x)\mapsto g^{\beta_t} (x)$, 
is $C^\infty$. 
\end{itemize}
A  Lie contraction  $\theta$ of $G$ determines  the deformation
$\beta$  of 
$\alpha_0$ to the trivial action, defined by
\[
  \beta_t =\alpha_0\circ \theta_t, 
\]
indicated by 

\begin{theorem}         \label{th:ST1}
Let $G$ be a  Lie contractible group having an almost effective
smooth action on $\S{n-1}$.  Then on every  topological $n$-manifold $M^n$ there
is an  effective action of $G$ which is the identity outside a
coordinate ball, and which is smooth if $M^n$ is smooth.  
\end{theorem}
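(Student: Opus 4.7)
My plan is to use the hypothesized Lie contraction of $G$ to interpolate along radii between the given sphere action $\alpha$ and the trivial action, producing a smooth, effective, compactly supported $G$-action on $\R n$, which can be transplanted into a coordinate ball of $M^n$.

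First, pick a Lie contraction $\theta=\{\theta_t\}_{t\in\RR}$ of $G$ and a smooth cutoff $\chi\co[0,\infty)\to[0,1]$ with $\chi\equiv 1$ on $[0,\tfrac14]\cup[\tfrac34,\infty)$ and $\chi\equiv 0$ on $[\tfrac13,\tfrac23]$. In polar coordinates on $\R n$ define
\[
\tilde\alpha(g,x)=\begin{cases}|x|\cdot(\theta_{\chi(|x|)}(g))^\alpha(x/|x|) & \mbox{if $x\ne 0$,}\\ 0 & \mbox{if $x=0$.}\end{cases}
\]
The relation $\tilde\alpha(gh,x)=\tilde\alpha(g,\tilde\alpha(h,x))$ holds because each $\theta_t$ is a group endomorphism and the inner formula preserves every sphere of fixed radius, so $\tilde\alpha$ is indeed a $G$-action. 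Wherever $\theta_{\chi(|x|)}$ equals the trivial endomorphism $\theta_1$, i.e.\ for $|x|\le\tfrac14$ or $|x|\ge\tfrac34$, we get $\tilde\alpha(g,x)=x$; so the support of $\tilde\alpha$ lies in the compact annulus $A=\{\tfrac14\le|x|\le\tfrac34\}$, well inside the open unit ball.

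The main technical obstacle I expect is smoothness of $\tilde\alpha$ at the origin, where the polar decomposition is singular. This is defused precisely by arranging $\chi\equiv 1$ on $[0,\tfrac14]$: on that neighborhood of $0$, the formula collapses to $\tilde\alpha(g,x)=x$, the identity map of $\R n$, which is obviously smooth. Elsewhere, smoothness reduces to smoothness of $\alpha$, of $\theta$ in $(t,g)$ by property (D3), and of $|x|$ and $x/|x|$ for $x\ne 0$. For effectiveness, suppose $\tilde\alpha(g,\cdot)$ is the identity of $\R n$. Since $\chi$ is continuous on a connected interval and attains both $0$ and $1$, the intermediate value theorem gives $\chi([0,\infty))=[0,1]$, so the hypothesis becomes $\theta_t(g)\in\ker(\alpha)$ for every $t\in[0,1]$. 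Almost effectiveness of $\alpha$ makes $\ker(\alpha)$ a discrete subgroup of $G$, so the continuous curve $t\mapsto\theta_t(g)$ into $\ker(\alpha)$ is constant on the connected interval $[0,1]$; since $\theta_1(g)=e$, we conclude $g=\theta_0(g)=e$.

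Finally, I would transplant $\tilde\alpha$ to $M^n$. Pick an interior coordinate ball $U\subset M^n$ with a chart $\phi\co U\to\R n$ that is a homeomorphism in general and a diffeomorphism when $M^n$ is smooth. Define the $G$-action on $M^n$ by $g\cdot x=\phi^{-1}(\tilde\alpha(g,\phi(x)))$ for $x\in U$ and $g\cdot x=x$ for $x\notin U$. The set $\phi^{-1}(A)$ is compact in $U$ and hence closed in $M^n$; outside $\phi^{-1}(A)$ both local formulas equal the identity, so the two definitions agree and produce an effective $G$-action on $M^n$ that is the identity outside $U$, continuous in general, and smooth whenever $M^n$ and $\phi$ are smooth.
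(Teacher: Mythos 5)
Your construction is correct and is essentially the paper's: the paper likewise uses the Lie contraction to interpolate radially between $\alpha$ acting on concentric spheres and the trivial action, working on $\S{n-1}\times\RR\cong\R n\verb=\=\{0\}$ and then transplanting the result into a coordinate ball. Your two-sided cutoff $\chi$ (which trivializes the action both near the origin and near infinity) and your discreteness argument for $\ker(\alpha)$ are slightly more careful treatments of the smoothness-at-$0$, compact-support, and almost-effectiveness points that the paper's one-sided cylinder coordinate passes over quickly.
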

\begin{proof} Let  $\theta$ be a Lie contraction of an effective
  smooth action $(G, \S{n-1}, \alpha)$.   An action\\
$(G, \S{n-1}\times \RR, \beta)$ is defined by
\[
  g^\beta\co (x, t) = (\theta_t(g)^\alpha (x), t).
\]
$\beta$ is smooth and effective, and
\[
  g^\beta\co (x, t) =
\begin{cases}
             (g^\alpha (x), t) & \text{if} t\le 0,\\
             (x,t)            & \text{if} t \ge 1.
\end{cases}
\]  
Transfer $\beta$ to an action $(G, \R n\verb=\=\{0\}, \gamma_0)$ by 
by the diffeomorphism 
\[
 \R n\verb=\=\{0\} \to \S{n-1}\times \RR,\qquad  (x, t)\mapsto e^{-t}x.
\] 
This action extends to a smooth effective action $(G, \R n, \gamma)$
which is the identity outside the unit ball.  It can therefore be
transferred to the desired action on $M^n$. 
\end{proof}

\begin{corollary}         \label{th:ST3}
For all $n\ge 2, \,k \ge 1$ there are effective smooth actions of
 $ST_\circ (n, \RR)^k$ on every smooth $n$-manifold
 and of $\w{ST}
(n,\CC)$ on every smooth $2n$-manifold.
\end{corollary}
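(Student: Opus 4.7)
The plan is to derive both statements from Theorem~\ref{th:ST1}, whose hypotheses are Lie contractibility together with an almost effective smooth action on a sphere of the appropriate dimension. The $k$-fold product in the real case does not follow by a single application to the product group, so I exploit the extra information that Theorem~\ref{th:ST1} produces an action that is \emph{the identity outside a coordinate ball}. Given a smooth $n$-manifold $M^n$, choose $k$ pairwise disjoint open coordinate balls $B_1,\dots,B_k\subset M^n$ and, for each $j$, apply Theorem~\ref{th:ST1} to obtain an effective smooth action $\alpha_j$ of $ST_\circ(n,\RR)$ on $M^n$ that is the identity outside $B_j$. Because the supports are pairwise disjoint, the $\alpha_j$ pairwise commute, and together they assemble into a smooth action of $ST_\circ(n,\RR)^k$; effectiveness is tested factor by factor by restricting to a single $B_j$, where only the $j$-th summand acts nontrivially.

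It remains to verify the hypotheses of Theorem~\ref{th:ST1} in each case. For Lie contractibility, note that $ST_\circ(n,\RR)$ is diffeomorphic as a manifold to $\RR_{>0}^{n-1}\times\RR^{n(n-1)/2}$ via the first $n-1$ diagonal entries together with the strictly upper entries; in particular it is simply connected and coincides with its universal cover $\w{ST}_\circ(n,\RR)$, so Theorem~\ref{th:liecont} endows it with a Lie contraction. The same theorem directly provides a Lie contraction of $\w{ST}(n,\CC)$, noting that $ST(n,\CC)$ is already connected, so $\w{ST}_\circ(n,\CC)=\w{ST}(n,\CC)$.

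For the sphere actions I projectivize the defining representations. With respect to the standard norm on $\FF^n$, the formula $g\cdot v=gv/\|gv\|$ defines a smooth action of $SL(n,\FF)$ on the unit sphere. Any $g$ in the kernel preserves every line and sends each unit vector to a positive scalar multiple of itself, so $g=\lambda I$ with $\lambda>0$; unimodularity then forces $\lambda^n=1$ and hence $\lambda=1$. Thus $ST_\circ(n,\RR)\subset SL(n,\RR)$ acts effectively on $\S{n-1}$ and $ST(n,\CC)\subset SL(n,\CC)$ acts effectively on $\S{2n-1}$; the pulled-back action of $\w{ST}(n,\CC)$ on $\S{2n-1}$ is then almost effective since the covering $\w{ST}(n,\CC)\to ST(n,\CC)$ has discrete kernel. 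With both hypotheses of Theorem~\ref{th:ST1} verified, that theorem, combined with the product construction of the first paragraph, yields the corollary. The main obstacle is the bookkeeping for the $k$-fold product, which is precisely why the localized form of Theorem~\ref{th:ST1} (actions supported in arbitrary coordinate balls) is essential.
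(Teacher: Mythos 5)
Your proposal is correct and follows essentially the same route as the paper: Lie contractibility from Theorem~\ref{th:liecont}, the projectivized sphere actions, and Theorem~\ref{th:ST1} applied in $k$ disjoint coordinate balls whose supported actions commute and assemble into an action of the $k$-fold product. You merely supply more of the routine verifications (simple connectivity of $ST_\circ(n,\RR)$, effectiveness of the sphere actions) that the paper leaves implicit.
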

\begin{proof} $ST_\circ (n,\RR)$ and $\w{ST} (n,\CC)$ are Lie
  contractible (Theorem \ref{th:liecont})
 and have effective smooth
  actions on $\S {n-1}$ and $\S{2n-1}$ respectively.
  By Theorem
  \ref{th:ST1} there are $k$ coordinate balls with disjoint closures
  in $M^n$ (respectively, $M^{2n}$) that support effective smooth
  actions of $ST_\circ (n, \RR)$ (respectively, x$\w{ST} (n,\CC))$.  The
  desired actions are obtained by letting $j$'th factor of the direct
  product act smoothly and effectively in  the $j$'th coordinate,
  ball and trivially outside it.
 \end{proof}

\section{The Epstein-Thurston obstruction to effective solvable
  actions}   \label{sec:ET} 
In this section $G$ can be either real or complex.  In the complex
case an $n$-action means a holomorphic action on a complex
$n$-dimensional manifold. 

D.B.A. Epstein and W.P. Thurston \cite[Theorem 1.1] {ET79} discovered
a fundamental necessary condition for effective local actions of
solvable Lie groups:\footnote{
The authors point out that their proof, stated for the real field, is
valid in any category having a ``good'' dimension theory.  It probably
works  for algebraic actions over  arbitrary fields.} 
\begin{theorem} \label{th:ET}
Assume   $G$ is solvable and has an effective local $n$-action.  Then $n\ge
\ell (G) -1$, and $n\ge \ell (G)$ if $G$ is nilpotent.
\end{theorem}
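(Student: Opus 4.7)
The plan is to pass from the local group action to the infinitesimal picture and extract the dimension bound from the derived flag of $\gg$ acting on $M^n$. By the local-group/Lie-algebra correspondence recalled in the terminology section, an effective $C^\infty$ local $G$-action on $M^n$ yields an injective Lie algebra homomorphism $\gg\hookrightarrow\vv^\infty(M^n)$; identifying $\gg$ with its image, the claim becomes: if $\gg\subset\vv^\infty(M^n)$ is solvable of derived length $\ell$, then $n\ge\ell-1$, with $n\ge\ell$ when $\gg$ is nilpotent.

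Next I would set up the flag
$$\gg=\gg^{(0)}\supsetneq\gg^{(1)}\supsetneq\cdots\supsetneq\gg^{(\ell-1)}\supsetneq\gg^{(\ell)}=0$$
and the associated singular distributions $D_j(p)=\{X_p:X\in\gg^{(j)}\}\subset T_p(M)$. On an open dense set $U\subset M$ each $D_j|_U$ has locally constant rank $r_j$; since $\gg^{(j)}$ is bracket-closed, $D_j|_U$ is involutive and by Frobenius integrates to a foliation $\mathcal{F}_j$ of dimension $r_j$. The strategy is inductive: on a transverse slice to $\mathcal{F}_{j+1}$ inside a leaf of $\mathcal{F}_j$, the abelian quotient $\gg^{(j)}/\gg^{(j+1)}$ should induce an action that contributes a transverse dimension, except possibly at one stage (accounting for the ``$-1$'' in the solvable bound). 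Summing transverse contributions across the flag gives $r_0\ge\ell-1$, and since $r_0\le n$, the solvable bound follows. For the nilpotent refinement I would replace the derived series by the lower central series $\gg_{(k)}$: the identity $\gg_{(k+1)}=[\gg,\gg_{(k)}]$ is linear in $\gg$ on the left, and this rules out a completely degenerate transverse action at the innermost stage, buying the additional dimension $n\ge\ell$.

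The hard part is making this transverse reduction rigorous, because the naive rank-drop statement $r_j>r_{j+1}$ already fails at the first step in elementary examples: $\ss\ttt(2,\RR)$ acts effectively on $\RR$ via $\{x\partial_x,\partial_x\}$ with $\ell=2$ but $r_0=r_1=1$. The bound must therefore be extracted not from the ranks $r_j$ alone but from a careful module-theoretic analysis tracking how the abelian quotients $\gg^{(j)}/\gg^{(j+1)}$ act on the leaf space of $\mathcal{F}_{j+1}$, along the lines of Epstein and Thurston. Their footnote that the proof works in any category with a ``good'' dimension theory suggests the core input is a general transversality/dimension-counting principle rather than anything specific to smoothness, and this is where I expect most of the work to lie.
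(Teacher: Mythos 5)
The paper does not prove this theorem: it is quoted from Epstein and Thurston \cite[Theorem 1.1]{ET79}, so there is no internal argument to compare yours against. Judged on its own terms, your proposal is an outline with an explicitly acknowledged hole rather than a proof. The reduction to an injective Lie algebra homomorphism $\gg\to\vv^\infty(M^n)$ and the setup of the derived flag with its singular involutive distributions are fine, and your counterexample to the naive rank-drop argument (the affine algebra spanned by $x\partial_x$ and $\partial_x$ on $\RR$, where $r_0=r_1=1$ yet $\ell=2$) is exactly the right observation: the bound cannot be read off from the generic ranks $r_j$ alone. But the entire content of the theorem lies in what you then defer to ``a careful module-theoretic analysis \dots\ along the lines of Epstein and Thurston.'' The assertion that each abelian quotient $\gg^{(j)}/\gg^{(j+1)}$ ``contributes a transverse dimension, except possibly at one stage'' is precisely what needs to be proved, and nothing in your sketch explains why at most one stage can fail to contribute, nor why nilpotency forbids even that single failure; the remark that $\gg_{(k+1)}=[\gg,\gg_{(k)}]$ is ``linear in $\gg$ on the left'' is not an argument.

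If you want to complete this, the missing ingredient is an inductive statement of roughly the following form: for a solvable $\gg$ acting on $M^n$, the ideal $\gg^{(d+1)}$ (respectively $\gg^{(d)}$ in the nilpotent case) acts trivially on every orbit of dimension $\le d$. This is in fact the form in which the paper later invokes the theorem (see the proof of Theorem \ref{th:ETcor}, where $G^{(l-1)}$ is asserted to act trivially on every orbit of dimension $<n$). One proves such a statement by induction on the orbit dimension, passing to the induced action on a local transversal to the orbit and analyzing the isotropy representation there; effectiveness then forces the stated lower bound on $n$. As it stands, your proposal correctly diagnoses where the difficulty sits but does not resolve it, so it cannot be accepted as a proof.
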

The same conclusions hold for solvable Lie algebras actions.  It turns
out that in the borderline dimensions there are further restrictions
on the structure of $G$ and its orbits:

\begin{theorem}         \label{th:ETcor}
Assume $G$ is solvable with derived length $l$.  Let $(G, M^n,
\alpha)$ be a nondegenerate local  action,  with $n=l$ if
$G$ is nilpotent and $n=l-1$ otherwise.

\begin{description} 
\item[(i)]  The union $W$ of the open orbits is dense.

\item[(ii)] Suppose $G^{(l-1)}$ lies in the center $C$ of $G$.
  Then $\dim (G^{(l-1)}) = \dim
  (C)=1$, and  $G^{(l-1)} =  C$. 

\end{description}
\end{theorem}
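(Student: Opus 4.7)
The plan is to leverage Theorem \ref{th:ET} twice: once to locate open orbits (giving (i)), then again on a transverse reduction at a generic open-orbit point (giving (ii)). For (i), I would argue that the Epstein--Thurston proof yields more than its stated inequality. Nondegeneracy forces the evaluation map $\operatorname{ev}_p\co \gg \to T_p M$ to have rank at least $l-1$ (resp.\ $l$ in the nilpotent case) at every $p$ in a dense open subset $W^\ast \subseteq M$; this is the content of their inductive construction of linearly independent fundamental vector fields indexed by the derived series. In the borderline dimension $n = l-1$ (resp.\ $l$), this rank equals $n$, so the orbit through each $p \in W^\ast$ is open, giving $W^\ast \subseteq W$ and hence denseness of $W$ in $M$. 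Denseness in every nonempty open $U \subseteq M$ follows by restricting the local action to $U$ (which preserves both nondegeneracy and the derived length of $G$) and reapplying.

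For (ii), write $H := G^{(l-1)}$ and $C := Z(G)$, so $H \subseteq C$; fix $p$ in an open orbit $O$. The key observation is that any $c \in C$ fixing $p$ also fixes every $g^\alpha(p)$, because $c \cdot g^\alpha(p) = g^\alpha(c \cdot p) = g^\alpha(p)$ by centrality; so $c$ fixes all of $O$, and nondegeneracy forces $c = e$. Thus $H$ and $C$ act freely on $O$. Since $H$ is normal and acts freely, its orbits form a $G$-invariant foliation of $O$ with $(\dim H)$-dimensional leaves; on a local transversal $T$ of dimension $n - \dim H$, the $G$-action descends to the quotient $\bar G := G/H$. Let $N \supseteq H$ be the kernel of this descended action, so $G/N$ acts effectively on $T$. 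Two facts must be checked before applying Theorem \ref{th:ET} to the action of $G/N$ on $T$: (a) $G/N$ is nilpotent if and only if $G$ is (because $H$ is central, and a central extension of a nilpotent group by an abelian group is nilpotent); and (b) $\ell(G/N) = l-1$, equivalently $G^{(l-2)} \not\subseteq N$. Granting these, the borderline Epstein--Thurston bound gives $n - \dim H \geq l-1$ or $l-2$ accordingly; substituting $n = l$ or $n = l-1$ yields $\dim H \leq 1$, and combined with $\dim H \geq 1$ this gives $\dim H = 1$.

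The passage to $C$ repeats the argument with $C$ in place of $H$. Here $\ell(G/C) = l-1$ is purely algebraic: $(G/C)^{(l-1)} = G^{(l-1)}C/C = \{e\}$, while $G^{(l-2)} \not\subseteq C$ (otherwise $G^{(l-1)} = (G^{(l-2)})' = \{e\}$, contradicting that $l$ is the derived length). This yields $\dim C = 1$, and since $H \subseteq C$ are both connected and $1$-dimensional, $H = C$.

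The main obstacle is claim (b) above---that $G^{(l-2)}$ does not lie in the geometric kernel $N$. In contrast to the purely algebraic step for $C$, this requires showing that some $\gg^{(l-2)}$-vector field on $O$ is not tangent to the $H$-foliation. I expect this to reduce to the fine analysis invoked in (i): at a generic $q \in O$ the subspaces $\operatorname{ev}_q(\gg^{(i)}) \subseteq T_q M$ form a strictly decreasing flag, so in particular $\operatorname{ev}_q(\gg^{(l-2)}) \supsetneq \operatorname{ev}_q(\gg^{(l-1)})$, giving the required transversality. Extracting and justifying this flag cleanly from the Epstein--Thurston induction is the technical crux.
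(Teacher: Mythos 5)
Your part (i) reaches the correct conclusion, but the justification is circular: the assertion that nondegeneracy forces the evaluation map $\gg\to T_pM$ to have rank at least $l-1$ (resp.\ $l$) on a dense open set \emph{is} the statement to be proved in the borderline dimension, and attributing it to ``the content of their inductive construction'' is not a proof. No strengthening of Theorem \ref{th:ET} is needed. The paper argues directly from its statement: on any orbit $O$ of dimension $m<n$, Epstein--Thurston applied to the effective quotient acting on $O$ forces the derived length of that quotient to be at most $l-1$, so $G^{(l-1)}$ acts trivially on $O$; effectiveness then produces some orbit on which $G^{(l-1)}$ acts nontrivially, which must therefore be open; and nondegeneracy prevents the union of the lower-dimensional orbits from containing an open set, since $G^{(l-1)}$ acts trivially there.

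The genuine gap is in (ii), and it is the one you flag yourself: claim (b), that $G^{(l-2)}$ does not lie in the kernel $N$ of the action descended to a transversal of the $G^{(l-1)}$-foliation. Without it your application of Theorem \ref{th:ET} yields no bound, and the ``strictly decreasing flag'' you hope to extract from the Epstein--Thurston induction is again asserted, not proved. Your claim that the passage to $C$ is ``purely algebraic'' fails for the same reason: what is needed there is $G^{(l-2)}\not\subseteq N_C$, where $N_C$ is the geometric kernel of the descended action, which contains $C$ but may be strictly larger; computing $\ell(G/C)=l-1$ does not give this. The paper's proof of (ii) sidesteps the whole issue by running Epstein--Thurston in the opposite direction. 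It fixes an \emph{arbitrary} one-dimensional central subgroup $Z$, passes (on an open orbit, using (i)) to the quotient $V^{n-1}$ by the $Z$-orbits, and applies Theorem \ref{th:ET} to that $(n-1)$-dimensional action as an \emph{upper} bound on the derived length of its effective quotient, concluding that $G^{(l-1)}$ acts trivially on $V^{n-1}$. Hence the orbits of $G^{(l-1)}\cdot Z$ coincide with the one-dimensional $Z$-orbits; centrality, transitivity and effectiveness (your own freeness observation) make the stabilizer trivial, so $\dim\bigl(G^{(l-1)}\cdot Z\bigr)=1$ and $G^{(l-1)}=Z$. Since $Z\subseteq C$ was arbitrary, $\dim C=1$ and $C=G^{(l-1)}$. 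No lower bound on the derived length of any effectively acting quotient is ever required, which is precisely the estimate your argument cannot supply.
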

\begin{proof}
{\em (i) } $G^{(l-1)}$ acts trivially in each orbit of dimension $<n$
by the Epstein-Thurston theorem.  As the action is effective, there is
an orbit $U$ in which $G^{(l-1)}$ acts nontrivially.  The
Epstein-Thurston theorem implies $U$ is $n$-dimensional.  This shows
that $W$ is nonempty.  Each orbit $M\verb=\=W$ has dimension $<n$,
therefore $G^{(l-1)}$ acts trivially in $M\verb=\=W$.  Nondegeneracy
implies $M\verb=\=W$ contains no open set, hence $W$ is dense.

{\em (ii) } Fix a $1$-dimensional subspace $Z\subset C$.  In view of
(i) we assume $\alpha$ is transitive and the orbits of $Z^\alpha$ are
the fibres of a trivial fibration of $\pi\co M^n\to V^{n-1}$.  Let
$(G/Z, V^{n-1}, \beta)$ be the action related equivariantly to $\alpha$
by $\pi$.  The Epstein-Thurston theorem implies $\beta|G^{(l-1)}$ is
trivial, hence $\alpha$-orbits of $G^{(l-1)}\times Z$ are the
$1$-dimensional orbits of $Z^\alpha$.  Let $K\subset G^{(l-1)}\times
Z$ be the stabilizer of some point of $M^n$ under the action of
$\alpha|(G^{(l-1)}\times Z)$.  Centrality of $G^{(l-1)}\times Z$ and
transitivity of $\alpha$ imply $K^\alpha$ stabilizes every point of
$M^n$, and is therefore trivial because $\alpha$ is effective.
Consequently $\dim(G^{(l-1)}\times Z)= 1$, which implies (ii) because
$G^{(l-1)}$ is nontrivial by the Epstein-Thurston Theorem.
\end{proof}

Examination of the proof yields:
\begin{corollary}               \label{th:}
Assume  $G, l$ and $n$  satisfy the hypothesis of Theorem
\ref{th:ETcor}, the center $C$ of $G$ contains $G^{(l-1)}$, and
$\dim C >1$.  Then the kernel of any analytic $n$-action of $G$
contains a $1$-dimensional central subgroup.  \qed
\end{corollary}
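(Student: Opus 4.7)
The plan is to adapt the proof of Theorem~\ref{th:ETcor}(ii) by passing to the effective quotient action. Let $\alpha$ be any analytic $n$-action of $G$ on some $M^n$, set $N=\ker(\alpha)$, and let $\pi\co G\to\bar G:=G/N$. The induced action $\bar\alpha$ of $\bar G$ on $M^n$ is effective and analytic (since $\pi$ admits local analytic sections), and, crucially, nondegenerate: if some $\bar g\neq e$ fixed an open subset of $M^n$, any lift $g\in G$ would have $g^\alpha$ fixing the same set, whence by the identity principle on the connected analytic manifold $M^n$ one gets $g^\alpha=\mathsf{id}$, so $g\in N$ and $\bar g=e$, a contradiction. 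This step is where the analyticity hypothesis is essential.

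Suppose toward contradiction that $N\cap C$ is discrete. Then $\pi$ embeds $C/(N\cap C)$ into $Z(\bar G)$, so $\dim Z(\bar G)\ge\dim C>1$, and $\bar{G}^{(l-1)}=\pi(G^{(l-1)})\subseteq Z(\bar G)$. Let $\bar l=\ell(\bar G)\le l$. If $\bar l<l$, then $\bar{G}^{(l-1)}=0$, so $G^{(l-1)}\subseteq N$; but $G^{(l-1)}$ is a positive-dimensional connected subgroup of $C$, contradicting the discreteness of $N\cap C$. If $\bar l=l$, I would check that $(\bar G,\bar l,n)$ meets the hypothesis of Theorem~\ref{th:ETcor}: when $G$ is nilpotent so is $\bar G$ and $n=l=\bar l$ matches; when $G$ is not nilpotent then $n=l-1=\bar l-1$, and $\bar G$ cannot itself be nilpotent, since then the Epstein--Thurston bound (Theorem~\ref{th:ET}) applied to the effective action $\bar\alpha$ would demand $n\ge\bar l$. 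Hence Theorem~\ref{th:ETcor}(ii) applies to $\bar\alpha$ and forces $\dim Z(\bar G)=1$, contradicting $\dim Z(\bar G)>1$.

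Both subcases contradict the assumption that $N\cap C$ is discrete, so $N\cap C$ has positive dimension, and its identity component contains the desired $1$-dimensional central subgroup of $G$ inside $\ker(\alpha)$. The principal obstacle is isolating the nondegeneracy of $\bar\alpha$; the remainder is bookkeeping about how derived length, nilpotency, and centrality behave under the quotient $G\to\bar G$.
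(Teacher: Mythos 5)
Your proof is correct, but it reaches the conclusion by a different mechanism than the one the paper intends. The paper's ``examination of the proof'' means re-running the stabilizer argument of Theorem~\ref{th:ETcor}(ii) directly: with $Z\subset C$ one-dimensional, the subgroup $K\subset G^{(l-1)}\times Z$ stabilizing a point is shown there to act trivially everywhere, so once $\dim(G^{(l-1)}\times Z)>1$ this $K$ (or rather the resulting positive-dimensional central stabilizer) is forced into $\ker(\alpha)$ rather than being forced to vanish --- the kernel is exhibited inside the center by the same computation that previously proved $\dim C=1$. You instead treat Theorem~\ref{th:ETcor}(ii) as a black box, pass to $\bar G=G/\ker(\alpha)$, and derive a contradiction from $\dim Z(\bar G)>1$ under the assumption that $\ker(\alpha)\cap C$ is discrete. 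The price of your route is the extra bookkeeping you correctly carry out: nondegeneracy of $\bar\alpha$ (which is exactly where analyticity enters, via the identity principle --- this is why the corollary is stated for analytic actions only), the case split on whether $\ell(\bar G)$ drops, and the verification via Theorem~\ref{th:ET} that $\bar G$ cannot become nilpotent in the non-nilpotent case. What it buys is a cleaner logical structure that does not require reopening the earlier proof; what the paper's route buys is slightly more information (it locates the kernel subgroup concretely as a stabilizer in $G^{(l-1)}\times Z$). Both arguments are sound; your case analysis for $\bar l<l$ versus $\bar l=l$ is complete and the final step (a positive-dimensional connected central subgroup contains a one-parameter, hence one-dimensional, central subgroup) is fine.
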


\begin{example}         \label{th:snp1}
 For all $n\ge 2$ the group $N(n,\FF):=ST (n, \FF)'\times \FF$ has
 effective smooth actions on every $n$-manifold (Theorem
 \ref{th:ST3}).  The actions constructed in the proof are highly
 degenerate, and in fact:

\begin{itemize}
\item {\em Every $n$-action of $N(n,\FF)$ is degenerate.} 
\end{itemize}
This follows from Theorem \ref{th:ETcor}(ii): \ $N(n,\FF)$ is
nilpotent and with   derived length $n$, and its center is
$2$-dimensional (over $\FF$) and contains the $1$-dimensional subgroup
$N_n^{n-1}$.
\end{example}

\section{Semisimple actions}   \label{sec:semisimple} 
Let $(G, M, \alpha)$ be an analytic action of a semisimple group.  
If the linearization $\msf d\alpha_p$ at $p\in \Fix G$   is trivial
then $\alpha$  is trivial, because in a neighborhood of $p$,  $\alpha$ 
is analytically equivalent to  $\msf d\alpha_p$  (A. Ku\v{s}nirenko
\cite{Ku67}, V. Guillemin \& S. Sternberg \cite{GS68}, R. Hermann
\cite{Hermann65}).  

Cairns \& Ghys \cite{CairnsGhys97} constructed effective $C^\infty$
actions of $SL(2,\RR)$ on $\R 3$ and $SL(3,\RR)$ on $\R 8$ with fixed
points at the origin, at which they are 
not topologically locally conjugate to 
analytic actions.  Nevertheless the same conclusion holds for $C^1$
actions by a  striking result, W. P. Thurston's  ``Generalized Reeb
Stability Theorem,'' \cite{Thurston74}:

\begin{theorem}[{\sc Thurston}]         \label{th:thurston}
If $\alpha$ is a  nontrivial local  $C^1$ action of a semisimple
Lie group,  at every  fixed point $p$ the linearized action
$\msf d\alpha_p$ is 
nontrivial. 
\end{theorem}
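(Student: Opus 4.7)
The plan is to exploit that a semisimple Lie group is perfect ($G=[G,G]$) and to iterate a $C^1$ commutator estimate.  Work in local coordinates $(x_1,\dots,x_n)$ on an open ball $B$ centered at $p$, identified with a ball in $\R n$, so that for $g$ in a compact neighborhood $K$ of $e\in G$ the map $\alpha(g)$ is a $C^1$ diffeomorphism of $B$ fixing $0$, with $\msf{d}\alpha(g)_0=I$ by the assumption that $\msf d\alpha_p$ is trivial.  The first step is to quantify this closeness: continuity of $(g,x)\mapsto \msf{d}\alpha(g)_x$ on the compact set $K\times\overline{B}$, combined with the pointwise vanishing of $\msf{d}\alpha(g)_0-I$, gives
\[
 \omega(r) \;:=\; \sup_{g\in K,\;|x|\le r}\bigl\|\msf{d}\alpha(g)_x-I\bigr\| \;\longrightarrow\; 0 \quad\text{as } r\to 0.
\]
By the mean value theorem this yields $\|\alpha(g)-\msf{id}\|_{C^1(B_r)}\le C_1\omega(r)$ for every $g\in K$, where $\|f\|_{C^1(B_r)} := \sup_{|x|\le r}\bigl(|f(x)-x|+\|f'(x)-I\|\bigr)$.

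The second step is the standard commutator estimate in $\mathrm{Diff}^1$: if $f,g$ are $C^1$ germs at $0$ fixing $0$ and sufficiently close to the identity in $C^1$, then on a slightly smaller ball $B_{r'}\subset B_r$ one has
\[
 \|[f,g]\|_{C^1(B_{r'})} \;\le\; C_2\,\|f\|_{C^1(B_r)}\cdot\|g\|_{C^1(B_r)}.
\]
The linear parts of $f$ and $g$ cancel in $fgf^{-1}g^{-1}$, yielding this quadratic gain—a $C^1$ analogue of the Baker--Campbell--Hausdorff identity.  The third ingredient is a Lie-theoretic lemma: since $\gg=[\gg,\gg]$, the commutator map $(a,b)\mapsto[a,b]$ has surjective differential at $(e,e)$, so there exist a neighborhood $V$ of $e$ and an integer $N=N(\dim G)$ such that every $g\in V$ is a product of at most $N$ commutators $[a_i,b_i]$ with all $a_i,b_i\in V$.

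Combining these yields the iteration.  For $g\in V$ we obtain $\|\alpha(g)\|_{C^1(B_{r'})}\le NC_2\omega(r)^2$.  Starting from $\epsilon_0 := C_1\omega(r_0)$ sufficiently small and setting $\epsilon_{k+1}=NC_2\epsilon_k^{\,2}$, the sequence $\epsilon_k$ tends to zero super-exponentially on a nested family of balls $B_{r_k}$.  With the radii chosen to stay bounded below, one concludes that on a fixed ball around $p$ one has $\alpha(g)\equiv\msf{id}$ for every $g\in V$.  Since $V$ generates the connected group $G$, the action $\alpha$ is the identity on an entire neighborhood of $p$, contradicting the nontriviality hypothesis.

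The main obstacle I expect is the delicate interplay between the quadratic improvement from the commutator estimate and the loss of radius which that estimate entails: the constant $C_2$ depends on uniform bounds for $f^{-1}$, $g^{-1}$, and their derivatives, and these must be controlled across all iterations so that $r_k\not\to 0$.  A secondary difficulty is the Lie-theoretic lemma that elements of a small neighborhood of $e$ in a semisimple group factor as bounded products of commutators from the same neighborhood; this requires surjectivity of the commutator map together with a sufficiently small initial choice of $V$ so that the factorizations stay inside $V$ when one iterates.
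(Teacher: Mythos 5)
The paper offers no proof of this theorem at all: it cites Thurston \cite{Thurston74} and merely observes that his argument is local, so the only benchmark is Thurston's original proof. Your strategy --- exploit perfectness of $G$ and feed a commutator estimate into an iteration --- is the right circle of ideas, but as written it rests on two claims that are false. First, the quadratic commutator estimate in the $C^1$ norm fails for $C^1$ diffeomorphisms. Writing $f=\mathrm{id}+\phi$, $g=\mathrm{id}+\psi$, the displacement of $[f,g]$ does satisfy $|[f,g](x)-x|\le \|D\phi\|_{C^0}\|\psi\|_{C^0}+\|D\psi\|_{C^0}\|\phi\|_{C^0}$, but the derivative $D[f,g]_x-I$ contains differences such as $D\phi_{a}-D\phi_{c}$ with $|a-c|\approx\|\psi\|_{C^0}$, and with no modulus of continuity for $D\phi$ beyond plain continuity such a difference is only $o(1)$, not $O(\|f\|_{C^1}\|g\|_{C^1})$; this is precisely the loss-of-derivatives issue that makes the $C^1$ case delicate. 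Since your recursion $\epsilon_{k+1}=NC_2\epsilon_k^{\,2}$ needs a $C^1$ bound as the output of each step to serve as the input of the next, the iteration does not close. Second, the differential of $(a,b)\mapsto aba^{-1}b^{-1}$ at $(e,e)$ is zero, not surjective; the bounded factorization into commutators must instead be extracted from $\gg=[\gg,\gg]$ by writing a basis as sums $\sum_i[Y_i,Z_i]$ and applying the inverse function theorem to $(t_1,\dots,t_N)\mapsto\prod_i[\exp(t_iY_i),\exp(Z_i)]$, which yields $a_i$ near $e$ but $b_i$ only in a fixed compact set.

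Both defects are repairable, and the repair shows why the hypothesis on $\mathsf{d}\alpha_p$ is exactly what is needed: since every $g$ has trivial linear part at $p$ and the action is $C^1$, the quantity $\omega(r)=\sup_{g\in K,\,|x|\le r}\|D\alpha(g)_x-I\|$ tends to $0$ as $r\to0$, and the displacement estimate above gives a \emph{linear} recursion $\delta(r')\le CN\,\omega(r)\,\delta(r)$ for the $C^0$ displacement $\delta(r)=\sup_{g\in K}\sup_{|x|\le r}|\alpha(g)(x)-x|$, with radius loss $r-r'$ of order $\delta(r)$. Choosing $r$ so that $CN\omega(r)\le 1/2$ and summing the resulting geometric radius losses forces $\delta\equiv0$ on a ball of definite size, so the action is trivial near $p$, contradicting nontriviality. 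This corrected scheme is a legitimate alternative to Thurston's own argument, which avoids iteration entirely: he normalizes the displacements $\alpha(g)(x_k)-x_k$ along a sequence $x_k\to p$ and extracts in the limit a nontrivial homomorphism $G\to\mathbb{R}^n$, which cannot exist because $G=[G,G]$ (more generally because the relevant first cohomology vanishes). As it stands, your proposal has a genuine gap at its central estimate and at the Lie-theoretic lemma; either carry out the $C^0$ repair in full or fall back on Thurston's cocycle argument.
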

\begin{proof}  While Thurston  states his theorem for global Lie group
  actions,  the proof is entirely local. 
\end{proof}

Other results on semisimple actions are given in the papers cited
above, and in T. Asoh \cite{Asoh87}, C. Schneider \cite {Schneider74},
D. Stowe \cite{Stowe83}, Uchida \cite{Uchida79, Uchida97}, Uchida \&
Mukoyama \cite{UchidaMukoyama02}.

Example  \ref{th:snp1} showed that all $n$-actions of $ST (n,
\RR)'\times \RR$ are degenerate.  This phenomenon cannot occur for
effective $C^1$ actions by semisimple groups: 

\begin{theorem}             \label{th:thurston1}
Let $G$ be a semisimple Lie group and $(G, M^n, \alpha)$ an effective
$C^1$ local action.  Then $\alpha$ is nondegenerate, as is the induced
action in $\p M^n$.
\end{theorem}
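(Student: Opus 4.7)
The plan is to argue by contradiction using Thurston's Generalized Reeb Stability Theorem (Theorem~\ref{th:thurston}). Suppose $\alpha$ is degenerate, so some $g_0 \in G \setminus \{e\}$ fixes an open set $U_0 \subseteq M^n$ pointwise. The most delicate step is to produce a nonzero $X \in \gg$ for which the vector field $X^\beta := d\alpha(X)$ vanishes identically on a nonempty open set $U \subseteq M^n$. For this I would use a conjugation--commutator trick: for each $Y \in \gg$ the curve $c(t) = g_0 \exp(tY) g_0^{-1} \exp(-tY)$ in $G$ satisfies $c(t)^\alpha(p) = p$ for every $p \in U_0$ and all sufficiently small $t$, because for small $t$ the point $\exp(-tY)^\alpha(p)$ still lies in $U_0$ and $g_0^{-1,\alpha}$ fixes $U_0$ pointwise, while $g_0^\alpha$ fixes $p \in U_0$. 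Differentiating at $t=0$ gives $X^\beta_p = 0$, where $X = c'(0) = (\mathrm{Ad}(g_0) - I)Y$; choosing $Y$ with $\mathrm{Ad}(g_0)Y \ne Y$ yields $X \ne 0$, which is possible whenever $g_0$ is not central in $G$. The central case ($g_0 \in Z(G)$, a finite group since $G$ is semisimple) is handled separately, e.g. by replacing $g_0$ with a noncentral element of the closed subgroup $H_{U_0} := \{h \in G : h^\alpha|_{U_0} = \mathrm{id}\}$, or by passing to the adjoint quotient.

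The key algebraic observation is that $\mathfrak{h}_U := \{X \in \gg : X^\beta|_U = 0\}$ is an \emph{ideal} of $\gg$: if $X^\beta$ vanishes on a neighborhood of $p \in U$ then both its value and its directional derivatives vanish at $p$, so for any $Y \in \gg$ the bracket $[Y,X]^\beta = [Y^\beta, X^\beta]$ also vanishes at $p$. Since $\gg$ is semisimple and $\mathfrak{h}_U$ is nonzero, it contains some simple direct summand $\gg_i$. The corresponding connected normal Lie subgroup $G_i \subseteq G$ then acts trivially on $U$: for $X \in \gg_i$ and $p \in U$, the $C^1$ flow identity $\frac{d}{dt}\exp(tX)^\alpha(p) = X^\beta_{\exp(tX)^\alpha(p)}$ together with an open--closed argument on the set $\{t \in \RR : \exp(tX)^\alpha(p) = p\}$ shows this set equals all of $\RR$; and $G_i$, being connected, is generated by such exponentials.

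The contradiction is now immediate. The closed set $\Fix{G_i} \subseteq M^n$ is $G$-invariant by normality of $G_i$ and contains the open set $U$. If $\Fix{G_i} = M^n$, then $G_i \subseteq \ker(\alpha)$, contradicting effectiveness (since $G_i$ is semisimple, hence nontrivial). Otherwise, pick a point $q$ on the boundary of the open set $\mathrm{int}\,\Fix{G_i}$: then $q \in \Fix{G_i}$ and is accumulated by points at which each $g \in G_i$ has derivative equal to the identity, so by $C^1$-continuity of $d(g^\alpha)$ the linearization of the $G_i$-action at the fixed point $q$ is trivial. Theorem~\ref{th:thurston} applied to the semisimple $G_i$ then forces $G_i$ to act trivially on $M^n$, contradicting effectiveness.

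For the induced action on $\partial M^n$, the same scheme runs inside $\partial M^n$, which is a manifold without boundary on which $\alpha$ restricts (since vector fields are tangent to $\partial M^n$ by assumption). The first two steps apply verbatim with $U$ an open subset of $\partial M^n$, the bracket being computed intrinsically there, yielding a simple factor $G_i$ of $G$ that acts trivially on all of $\partial M^n$. To get the required contradiction on $M^n$, observe that at any $q \in \partial M^n$ the linearization of the $G_i$-action on $T_qM^n$ is trivial on the codimension-one subspace $T_q(\partial M^n)$, while the induced representation on the normal line $T_qM^n / T_q(\partial M^n)$ is a one-dimensional real representation of the semisimple group $G_i$ and is therefore trivial because $[\gg_i, \gg_i] = \gg_i$. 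Thus the full linearization at $q$ is trivial, Theorem~\ref{th:thurston} gives $G_i \subseteq \ker(\alpha)$, and this final contradiction completes the proof. The hard part will be Step~1: extracting a nonzero Lie algebra element from a mere group element that fixes an open set, where one must rule out the pathological case that the closed subgroup $H_U$ is discrete.
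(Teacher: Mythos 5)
Your overall strategy is the same as the paper's: manufacture a normal semisimple piece of $G$ that acts trivially on a nonempty open set, apply Thurston's Theorem~\ref{th:thurston} to conclude that this piece lies in $\ker(\alpha)$ (contradicting effectiveness), and handle $\p M^n$ by observing that the linearization at a boundary point of anything acting trivially on the boundary is a shear, hence trivial for a semisimple group. Where you differ is in how the normal piece is produced: the paper takes $K$ to be the kernel of the action restricted to an open set $U$ and asserts it is normal and semisimple, while you descend to the Lie algebra and show $\mathfrak{h}_U=\{X\in\gg: X^\beta|_U=0\}$ is an ideal by locality of the bracket. Your version is actually cleaner on that point (the paper's $K$ is not obviously normal, since $U$ need not be invariant), and your Steps 2--4 and the boundary step are essentially sound --- modulo spelling out that ``trivial on the hyperplane and trivial on the quotient line'' only places the linearization in an abelian group of shears, which is then trivial because $\gg_i$ is perfect; that is the same argument the paper makes with its block matrices.

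The genuine gap is the one you flag yourself: Step 1 produces a nonzero $X\in\mathfrak{h}_{U_0}$ only when some element fixing $U_0$ pointwise is noncentral, i.e.\ has $\mathrm{Ad}(g_0)\ne I$. If every such element is central, then $H_{U_0}$ is discrete, $\mathfrak{h}_{U_0}=0$, and the construction yields nothing; neither proposed repair works ($H_{U_0}$ may simply contain no noncentral element, and in the adjoint quotient the image of $g_0$ is the identity, so the degeneracy you are trying to contradict disappears). Note also that $Z(G)$ need not be finite --- $\widetilde{SL}(2,\RR)$ has infinite cyclic center --- and Theorem~\ref{th:thurston}, stated for semisimple Lie groups, does not apply to a discrete central subgroup. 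Since nondegeneracy is a statement about every individual nontrivial element, this case must be closed off before the proof is complete. To be fair, the paper's own proof elides the same case (its $K$ could in principle be a discrete central subgroup, to which ``Thurston's theorem applied to $(K,M^n,\alpha|K)$'' does not literally apply), so you have isolated a real subtlety rather than missed an easy step; but as written your argument is incomplete exactly where you say it is.
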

 \begin{proof} It suffices to consider a $C^1$ local group action
   $(G, M^n,\alpha)$.  For every invariant set $L\subset M$ let $(G, L,
   \alpha_L)$ be the action induced by $\alpha$.  Fix a nonempty open set
   $U\subset M^n$ and let $K\subset G$ denote the kernel of
   $\alpha_U$.  Every point $p\in U$ is a fixed point of $\alpha|K$ at
   which the linearized action $\msf d\alpha_p|\kk$ is trivial.  As
   $K$ is normal in $G$ and therefore semisimple, Thurston's theorem
   applied to $(K, M^n, \alpha|K)$ shows that $K\subset \ker
   (\alpha)$.  Therefore $K$ is the trivial subgroup because $\alpha$
   is effective, proving that $\alpha$ is nondegenerate.

Assume {\em per contra } that $\alpha_{\p M}$ is degenerate.  The
preceding paragraph shows that there is a nontrivial proper normal
subgroup $H\subset G$ such that $H^\alpha$  acts trivially on $\p M$.  Let
$(H, M^n.\gam)$ be the action induced by $\alpha$.  At every $p\in\p
M^n$ there is an analytic 
coordinate chart centered at $p$ taking a neighborhood of $p$ onto an
open subset of the origin in the closed half-space of $\R n$ defined
by $x_n \ge 0$.  In these coordinates $\msf d\gamma_p$ represents $H$
in the abelian subgroup comprising the  matrices $A\in GL (\R n)$
having the block form 
$ 
 \left[\begin{smallmatrix} 
   I_{n-1} & b\\ 0 & 1
  \end{smallmatrix}\right]$.
 Semisimplicity of $H$ implies $\msf d\gam_p$ is trivial.  Therefore
 $\gamma$ is trivial by Thurston's theorem, contradicting
 effectiveness of $\alpha$.
\end{proof}

Here is another application of Thurston's result: 
\begin{theorem}               \label{th:SL}
 If $\p M^n\ne\varnothing$, every $C^1$ local action $(SL_\circ (n+1,\RR),
 M^n, \alpha)$ is trivial.
\end{theorem}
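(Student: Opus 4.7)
The plan is to assume $\alpha$ is nontrivial and derive a contradiction, by first showing that every boundary point must be a fixed point of $\alpha$, and then linearizing there to reach a contradiction with Thurston's Theorem \ref{th:thurston}.

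First I would fix $p \in \p M^n$ and use that the vector fields $X^\alpha$ (for $X \in \gg := \ss\ll(n+1,\RR)$) are tangent to $\p M^n$; hence the evaluation map $\msf{ev}_p \co \gg \to T_p(\p M^n)$ has image in an $(n-1)$-dimensional space, and its kernel $\gg_p$ (the isotropy subalgebra at $p$) has codimension at most $n-1$ in $\gg$. The decisive Lie-theoretic input is that \emph{every proper subalgebra of $\ss\ll(n+1,\RR)$ has codimension at least $n$}, with the minimum $n$ realized by the maximal parabolic stabilizing a line in $\R{n+1}$, giving the projective action on $P^n$. Granting this, $\gg_p = \gg$, so $X^\alpha_p = 0$ for every $X \in \gg$, and $p$ is a fixed point of the Lie algebra action $\msf{d}\alpha$; by exponentiation it is also a fixed point of the local group action $\alpha$.

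Since $\p M^n \ne \varnothing$ by hypothesis, pick any such boundary fixed point $p$ and form the linearization
\[
  \msf{d}\alpha_p \co \gg \to \mathfrak{gl}(T_p M^n) \cong \mathfrak{gl}(n,\RR).
\]
This is a real Lie algebra representation of $\ss\ll(n+1,\RR)$ in dimension $n$. The smallest nontrivial irreducible real representation of $\ss\ll(n+1,\RR)$ is the defining representation on $\R{n+1}$, and by Weyl's complete reducibility theorem for semisimple algebras every lower-dimensional representation decomposes into trivial summands; therefore $\msf{d}\alpha_p = 0$. Thurston's Theorem \ref{th:thurston} asserts that every nontrivial $C^1$ local action of a semisimple Lie group has \emph{nontrivial} linearization at every fixed point, so the vanishing $\msf{d}\alpha_p = 0$ forces $\alpha$ to be trivial, the required contradiction.

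The principal obstacle is the subalgebra-codimension bound cited in the first step, which is classical but not elementary: it follows from Dynkin's classification of maximal subalgebras of simple Lie algebras (each is either parabolic or reductive), combined with the easy computation that the minimum-codimension parabolic in $SL_\circ(n+1,\RR)$ is the stabilizer of a line, of codimension $n$. An equivalent formulation is that $SL_\circ(n+1,\RR)$ admits no nontrivial smooth action on a manifold of dimension less than $n$; one could instead descend from the analogous statement for $\ss\ll(n+1,\CC)$ by complexification. All routes, however, require genuine Lie-theoretic input beyond what the excerpt supplies.
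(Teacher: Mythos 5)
Your argument is correct and ends at the same place as the paper's --- the contrapositive of Thurston's Theorem \ref{th:thurston} at a boundary fixed point --- but it gets there by a genuinely different route. The paper's proof is two lines long: since $\ell(ST_\circ(n+1,\RR))=n+2$, the Epstein--Thurston Theorem \ref{th:ET} forbids effective local actions of this solvable subgroup on the $(n-1)$-manifold $\p M^n$, so the boundary action is degenerate, and Theorem \ref{th:thurston1} (whose proof already packages the boundary linearization and the appeal to Thurston) finishes. You instead prove the stronger intermediate statement that \emph{every} boundary point is fixed by the whole group, via the codimension bound for proper subalgebras of $\ss\ll(n+1,\RR)$, and then kill the linearization using Weyl complete reducibility together with the fact that the smallest nontrivial irreducible real representation has dimension $n+1$. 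Both external facts are true and you flag them honestly, but they are classification-level inputs not available in the paper, whereas the paper's route uses only its own Theorems \ref{th:ET} and \ref{th:thurston1}. Two remarks. First, your second step can be made elementary: once the whole boundary is pointwise fixed, $\msf d\alpha_p$ takes values in the solvable group of matrices $\left[\begin{smallmatrix} I_{n-1} & b\\ 0 & c \end{smallmatrix}\right]$ (it is the identity on $T_p(\p M^n)$ and preserves the inward half-space), and a homomorphism from a perfect group to a solvable group is trivial --- this is exactly the observation in the proof of Theorem \ref{th:thurston1}, and it spares you the representation theory. Second, the real weight of your proof therefore rests entirely on the subalgebra-codimension lemma; it is correct (complexify and use that the minimal-codimension proper subalgebra of $\ss\ll(n+1,\CC)$ is the parabolic stabilizing a line, of codimension $n$), but if you want a proof at the level of the paper's toolkit, the Epstein--Thurston route is the cheaper way to see that the boundary action is too small to be effective.
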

\begin{proof} The Epstein-Thurston Theorem \ref{th:ET} implies  
the subgroup $ST_\circ (n+1,\RR)$ does not have  effective local actions on
 $(n-1)$-manifolds.  Therefore  $\alpha_{\p M^n}$ is degenerate,
so $\alpha$ is trivial by Theorem \ref{th:thurston1}.
\end{proof}

\begin{example}         \label{th:exSL2}
Theorem \ref{th:SL} shows that $\w{SL}_\circ (2,\RR)$ does not have 
effective $C^1$ local actions on the compact interval $[0,1]$.  On the
other hand:
\begin{itemize}
\item{\em  $\w{SL}_\circ (2,\RR)$
  has nondegenerate  continuous actions on
$[0,1]$. }
\end{itemize}
To construct such an action,  identify the open unit interval $]0,1[$ with a
universal covering space of $\S 1$, lift  the natural action
$(SL_\circ (2,\RR), \S 1, \alpha)$ to an action $(\w{SL}_\circ
(2, \RR), ]0,1[, \beta)$,  and extend  $\beta$ to an action  
$(\w{SL}_\circ (2, \RR), ]0,1[, \delta)$.

By putting $\delta$ on each radius of the compact $n$-disk $\D n$,
for every $n$ we get a  nondegenerate  action of  $\w{SL}_\circ (2,\RR)$
on $\D n$ that is trivial on $\p\D n$.  This leads to:
\begin{itemize}
\item{\em $\w{SL}_\circ (2,\RR)$ acts nondegenerately on all
  $CW$-complexes.}\footnote{Professor Palais said  this
  construction is a ``cheat''.} 
\end{itemize}
\end{example}

\bibliographystyle{amsplain}

\bigskip

\flushleft
-----------------------------------------------------------\\

\end{document}